\newtheorem{lem}{Lemma}[section]
\newtheorem{ejem}{Example}[section]
\newtheorem{rmk}{Remark}[section]
\def\IE{\mathrm{IE}}
\def\A{\mathcal{A}}
\def\X{\mathcal{X}}
\def\IE{\mathrm{IE}}
\def\R{\mathbb{R}}
\def\Z{\mathbb{Z}}
\newcommand{\dsum}{\displaystyle\sum}
\newcommand\IEFLP{{\rm IEFLP}}
\let\origmaketitle\maketitle
\def\maketitle{
  \begingroup
  \def\uppercasenonmath##1{} 
  \let\MakeUppercase\relax 
  \origmaketitle
  \endgroup
}
\begin{document}

\title[Intra-facility equity in $p$-Facility Location Problems]{\Large Intra-facility equity in discrete and continuous $p$-facility location problems}

\author[V. Blanco, A. Mar\'in \MakeLowercase{and} J. Puerto]{{\large V\'ictor Blanco$^{a,b}$, Alfredo Mar\'in$^c$ and  Justo Puerto$^{d,e}$}\medskip\\
$^a$Institute of Mathematics (IMAG), Universidad de Granada\\
$^b$ Dpt. Quant. Methods for Economics \& Business, Universidad de Granada\\
$^c$ Dpt. Stats. \& Operational Research, Universidad de Murcia\\
$^d$ Institute of Mathematics (IMUS), Universidad de Sevilla\\
$^e$ Dpt. Stats. \& Operational Research, Universidad de Sevilla\\
}

\address{IMAG, Universidad de Granada, SPAIN.}
\email{vblanco@ugr.es}

\address{Department of Statistics and Operational Research, Universidad de Murcia, SPAIN.}
\email{amarin@um.es}

\address{IMUS, Universidad de Sevilla, SPAIN.}
\email{puerto@us.es}

\date{}

\begin{abstract}
We consider facility location problems with a new form of equity criterion. Demand points 
have preference order on the sites where the plants can be located. The goal is to find the
location of the facilities minimizing the envy felt by the demand points with 
respect to the rest of demand points allocated to the same plant. After defining 
this new envy criterion and the general framework based on it, we  provide formulations that  model this approach in both the discrete and the continuous framework. 
The problems are illustrated with examples and the computational tests reported show 
the potential and limits of each formulation on several types of instances. 
Although this article is mainly focused on the introduction, modelling  and formulation of this 
new concept of envy, some improvements for all the formulations presented are
developed, obtaining in some cases better solution times.
\end{abstract}

\keywords{Facility Location, Fairness, Equity, Mixed Integer Linear Programming}

\maketitle

\section{Introduction}

The notion of equity has played a very important role in Decision Theory and Operations Research over the years since it is one of the most popular criteria to judge fairness. It is mainly due to the fact that satisfaction is envy-free, i.e., a solution of a decision process such that every involved decision-maker likes its own solution at least as much as the one of any other agent. In particular, envy-freeness criterion has been used in many different problems related with fair resource allocation, fair queuing processes, fair auctions or pricing problems, among others (see e.g.\ 
\cite{Brams&Fishburn:2000,Chun:2006,DallAglio&Hill:2003,dominguez2022mixed, Haake&Rait&Su:2002, Moulin:2014,Ohseto:2005,Papai:2003,Reijnierse&Potters:1998,Shioura&Sun&Yang:2006,Webb:1999}).

One of the most analyzed areas in Operations Research is Facility Location, whose goal is to find the most appropriate positions for a set of services in order to satisfy the demand required by a set of users. Facility location problems are classified according to two main characteristics: the solution domain and the criteria used to evaluate the goodness of a solution. Concerning the solution domain, in case the placements of the services are to be chosen from a finite set of potential facilities, the problem is called a \textit{Discrete Facility Location problem} (DFLP) while if the position of the facilities are chosen from a continuous set, the problem is called a \textit{Continuous Facility Location problem} (CFLP). Apart from the practical applications of these two frameworks, the main difference between these two types of frameworks stems in the tools that are applied to solve the problems. While in 
 DFLP Integer Linear Programming is the most common tool used to solve, exactly, the problems, in  CFLP one resorts to the use of Convex analysis or Global Optimization tools due to the non-linear nature of the problems. The interested reader is referred to \cite{Laporte&Saldanha&Nickel:2019} for recent contributions in this area.
 
Concerning the evaluation criteria, the most popular measure to evaluate a given set of positions with respect to a set of customers is the sum of the overall allocation costs. This cost usually represents the access cost of customers to the facilities, and it is an accurate indicator of the efficiency of the logistic system. With such a measure one assumes that for the customers, the less costly the better, which is a natural assumption.  This criterion is the one used in the well-known $p$-median~\cite{Hakimi:1964} problem or the multifacility continuous location problem~\cite{Blanco&Puerto&ElHaj:2016}. However, other measures have also been proposed in the literature.

In this paper, we introduce an equity criterion that can be incorporated to make decisions on different facility location problems. Measuring the goodness of the solutions of location problems by means of the equity criterion is not new. However, still the body of literature analyzing facility location problems under equity lens is scarce, being  the maximum the most representative objective function in this area, giving rise to the well-known (discrete and continuous) $p$-center problems. In spite of that, some authors have already analyzed the use of criteria which promote equitable or fair solutions in some facility location problems.  

Savas \cite{Savas:1978} stated the insufficiency of efficiency and effectiveness measures
in location models for public facilities. 
Halpern and Maimon \cite{Halpern&Maimon:1981} considered a large number of tree networks in order to determine the agreement and disagreement of the solutions to location problems 
using the median, center, variance and Lorenz measure. 
Mulligan \cite{Mulligan:1991} designed a simple experiment consisting of locating a facility in an interval of the real straight line regarding three demand 
points. Apart from a comparative analysis of the optimal solutions for nine equality measures, he   also provided the standardized travel distance curves for them. 
Erkut \cite{Erkut:1993} proposed a general framework for quantifying inequality and presented some axioms for the appropriateness of the inequality measures. He also showed that only two of his considered measures --the coefficient of variation and the Gini coefficient-- hold both the scale-invariance property and the principle of transfers or Pigou-Dalton property. 
Berman and Kaplan \cite{Berman&Kaplan:1990} addressed the equity question using taxes.
Mar\'in et al.\ \cite{Marin&Nickel&Velten:2010} considered a general case of equity criterion for the Discrete Ordered Median Problem. 
Mesa et al.\ \cite{Mesa&Puerto&Tamir:2003} and Garfinkel et al.\  \cite{Garfinkel&Fernandez&Lowe:2006} addressed some algorithmic aspects of equity measures on network location and routing. Bertsimas et. al~\cite{Bertsimas&Farias&Trichakis:2012} proposed a fairness measure and incorporated it to resource allocation problems. More recently, Filippi et. al~\cite{Filippi&Guastttaroba&Speranza:2021} apply the conditional $\beta$-mean as a fairness measure for the bi-objective single-source capacitated facility location problem. Blanco and G\'azquez\ \cite{Blanco&Gazquez:2022} have introduced a new fairness measure   for the maximal covering location problem combining the approach in \cite{Bertsimas&Farias&Trichakis:2012} with OWA operators.
A review of the existing literature on equity measurement in Location Theory and a discussion on how to select an appropriate measure of equality was contained in the paper by Marsh and Schilling 
\cite{Marsh&Schilling:1994}. 
Also the equality objective literature was reviewed in the book by Eiselt \cite{Eiselt:1995} within a general discussion of objectives in Location Theory based on the physic concepts of pulling, pushing and balancing forces.

The envy criterion has also been considered in Location Theory as an equity criterion. In this framework envy is defined with respect to the revealed preference of each demand point for the sites of the potential serving facilities.
It is assumed that the users (demand points) elicit their preferences for the sites where the plants can be located. A positive envy appears between two users  when one of them is allocated to a plant strictly most preferred than the the one where it is allocated the other. The goal is to find the location of the facilities minimizing 
the total envy felt by the entire set of demand points. A limitation of this approach is that information is common knowledge. Thus, it is
assumed that the decision-maker has a previous complete knowledge of the preferences of all customers at the 
demand points or, alternatively, that all customers do not lie when they are asked about their preferences.
Espejo et al.\ \cite{Espejo&Marin&Puerto&RodriguezChia:2010} study a discrete facility location problem with 
an envy criterion. There, it is assumed that demand points feel envy with respect to all other better 
located demand points. Instead, in this paper we do not require that users have a complete knowledge of the envy felt by all the users. Rather than that, only partial knowledge of the points that are allocated to the same facility is assumed. The reason is that we assume that location points will only felt envy with respect to the users allocated to its same facility.  On the other hand, in \cite{Espejo&Marin&Puerto&RodriguezChia:2010} the envy is measured using only the 
preferences of the points,  while in our case the dissatisfaction of the user is given by a function that 
is independent of the preference function.

Following with the applications of the envy concept to Location Theory, one can find an adaptation of the envy  concept to the system of ambulances location in Chanta et al.\ \cite{Chanta&Mayorga&Kurz&McLay:2011} with the objective of
minimizing the sum of envies among all users with respect to an ordered set of operating stations. Also in
\cite{Chanta&Mayorga&McLay:2014}, envy is redefined as differences of customers’ satisfaction between users,  where satisfaction is measured by the survival probability of each user.

The model addressed in this paper fits well with the location of  services provided by public administrations where the planner (decision-maker) takes into account users preferences but at the same time wishes to offer the same service quality to all users allocated to the same service center. This policy ensures that when users meet at the service center no-one  has incentive to complaint on the basis of others users' quality of service. This may be applicable to outpatient consultations in public health systems where a  central authority allocates population areas to outpatient centers or to the location of civic centers offering service to population areas. In the continuous setting, the intra-envy location applies to determine the position of HPC servers and job allocation to them such that the communication/energy costs of all jobs allocated to the same node are to be similar to ensure fair comparison of the computational hardness of the jobs run in the same node (which are supposed to run under the same conditions)~\cite{Meng&McCauley&Kaplan&Leung&Coskun:2015}.

The paper is organized as follows. The intra-envy location problem in a general framework is 
introduced in Section \ref{general}. Applications to the continuous and the discrete cases, 
including formulations and improvements can be found in sections \ref{continuo} and 
\ref{discreto}, respectively. Some computational results that highlight the limits of the 
solution approaches presented in the paper can be seen in Section \ref{resultados}. 
Finally, some conclusions are drawn.

\section{The General Minimum  $p$-Intra-Envy Facility Location Problem} \label{general}

In this section we introduce the problem that we analyze and fix the notation for the remaining sections. We provide here a framework for the  $p$-Intra-Envy Facility Location Problem for general domains and cost functions.

Let $\A=\{a_1,\dots, a_n\}$ be a finite set of demand points in $\R^d$ indexed in set $N=\{1,\dots,n\}$.  Abusing of notation, throughout this paper we  refer to a demand point interchangeably by  $a_i$ or by the index $i$, for $i\in N$. Denote by $\X$ a (not necessarily finite) set of points also in $\R^d$ that represent the potential locations for a facility. In order to quantify the cost incurred by a demand point when it is allocated to a facility in $\X$, each demand point, $a_i$, is assumed to be endowed with a cost function $\Phi_i: \X \rightarrow \R_+$ that represents the cost of allocating the service demanded by the user $i$ from each of the potential facilities in $\X$. These cost functions can be induced by distances or by general functions representing the dissatisfaction of the users for the different potential facilities. 

We are also given $p \in \Z$ with $p\geq 1$, and we denote by $P=\{1, \ldots, p\}$ the index set of the facilities to be located. A $p$-Facility location problem consists of choosing $p$ facilities from $\X$ minimizing certain cost function to the demand points. It is usual to assume, as we also do here, that users are allocated to the less costly open facility. We also assume that in case of ties, demand points are allocated to the facilities producing the less global intra-envy.

In this paper, we will measure the goodness of a selected set of facilities by the overall envy  of the pairwise allocation costs between demand points allocated to the same facility. 
 Specifically, if $\mathbf{X} = \{X_1, \ldots, X_p\} \subset \X$ is a given set of facilities
and $j(\ell) := \arg\min_{j\in P} \Phi_\ell(X_j)$, i.e., the most preferred facility for the user, for each pair of demand points $i, k \in N$, we compute the intra-envy of $i$ for $j$ as:
$$
\IE_{ik}(\mathbf{X})  = \begin{cases}
 	\Phi_i(X_{j(i)})-\Phi_k(X_{j(k)}) & \mbox{if $\Phi_k(X_{j(i)})<\Phi_i(X_{j(k)})$ and $j(i)=j(k)$,}\\
   0 & \mbox{otherwise.}  
\end{cases}
$$

That is, if the less costly open facility for $i$ and $k$ coincides and the allocation cost of $k$ is smaller than the allocation cost for $i$, an intra-envy of $i$ for $k$ is incurred, equal to the difference between the allocation cost of $i$ and the allocation cost of $k$.

The goal of the  $p$-Intra-Envy Facility Location Problem 
($p$-\IEFLP, for short) is to chose $p$ out of the facilities from $\X$ minimizing the overall sum of the intra-envy of all pairs of demand points allocated to the same facility. Formally, the problem can be formulated as follows:
\begin{equation}\label{pIEFLP:1}\tag{$p$-IEFLP}
\min_{\mathbf{X} \subseteq \X:\atop |\mathbf{X}|=p} \dsum_{i\in N}\dsum_{k\in N} \IE_{ik}(\mathbf{X}).
\end{equation}

Observe that, avoiding duplicates and zeros in the expression being minimized above, the objective function can be equivalently rewritten as:

$$
\dsum_{i\in N}\dsum_{k\in N} \IE_{ik}(\mathbf{X}) = \dsum_{i =1}^{n-1} \dsum_{k=i+1 :\atop j(k)=j(i)}^n |\Phi_i(X_{j(i)})-\Phi_k(X_{j(k)})|.
$$
Using the usual allocation variables in facility location
\begin{eqnarray*}
 x_{ij} & = &
\begin{cases}
   1 & \mbox{if plant $j$ is the less costly open facility for $i$,} \\
   0 & \mbox{otherwise,}
\end{cases} \hspace{1cm} \forall i\in N, j\in P,
\end{eqnarray*}
and the variables $X_1, \ldots, X_p \in \X$ that represent 
the locations for the  facilities,
the problem can be stated as follows:
\begin{align}
\min &\dsum_{i \in N} \dsum_{k\in N:\atop k>i} \dsum_{j \in P} |\Phi_i(X_j)-\Phi_k(X_j)| x_{ij} x_{kj},\label{miemp:0}\\
\mbox{s.t.} & 
\dsum_{j \in P} x_{ij} =1,\ \ \forall i\in N,\label{miemp:1}\\
& \Phi_i(X_{j(i)}) \le \Phi_i(X_j),\ \ \forall i\in N, j \in P,\label{miemp:3}\\
& X_1, \ldots, X_p \in \X,\label{miemp:4}\\
& x_{ij} \in \{0,1\},\ \ \forall i \in N, j \in P. \nonumber
\end{align}
The objective function \eqref{miemp:0} accounts for intra-envy between pairs of points allocated to the same facilities. Constraints \eqref{miemp:1} assure that a single allocation is obtained for each demand point and \eqref{miemp:3} model the closest-assignment assumption. The set of constraints \eqref{miemp:4} indicates the domain of the positions of the facilities on a set of potential facilities. 

We analyze here the two main frameworks in facility location based on the nature of the set $\X$. On the one hand, we will study the \textit{continuous} case in which $\X=\R^d$ and, then, the facilities are allowed to be located in the whole decision  space. On the other hand, we will analyze the \textit{discrete} case in which $\X=\{b_1, \ldots, b_m\}$ is a finite set of potential locations for the facilities. 

Note that there are several differences when analyzing $p$-IEFLP under these two different frameworks:
\begin{enumerate}
\item In the discrete case, the possible costs between the demand points and the potential facilities can be obtained in a preprocessing phase since the possibilities for the values of $X_j$ are known and finite. Thus, one can compute a cost matrix 
 $ \Phi = (\Phi_i(b_j))\in \R^{n\times m}$ 
 that serves as input for the problem. In contrast, in the continuous case the costs can only be known when the coordinates of the facilities are computed and, then, their values have to be incorporated as decision variables  to the problem.
 Furthermore, in a continuous problem, the shape of the cost functions $\Phi_1, \ldots, \Phi_n$ has a significant impact in deriving a suitable mathematical programming formulation of the problem. 
\item The closest-assignment constraints \eqref{miemp:3} must be treated differently for  the continuous and the discrete case because of the knowledge of the cost values. For the discrete case, there are different approaches to incorporate linear constraints enforcing this requirement (see \cite{Espejo&Marin&Puerto&RodriguezChia:2010}).
 In the continuous case this requirement must also be ensured using different strategies. 
\item The absolute values in the objective function measuring the envy in case the costs are known, as in the discrete case, are constant values. In the continuous case  these values are unknown and part of the decision problem.
\end{enumerate}

In what follows, we illustrate the different situations when locating $p$ facilities with the different criteria, namely, the $p$-median, the $p$-envy (\cite{Espejo&Marin&Puerto&RodriguezChia:2010}) and the $p$-intra-envy
with the continuous and the discrete frameworks.

\begin{ejem}\label{ex:1}
Consider the six demand points in the real line $\A=\{1, 2, 4, 6, 10, 14\}$. For the discrete problem, we assume that each demand point is also a potential facility. We aim to locate $p=2$ facilities.

In the discrete setting, the dissatisfaction matrix (based on distances) can be prespecified as:
$$ \Phi=\begin{pmatrix}
0&1&3&5&9&13\\
1&0&2&4&8&12\\
3&2&0&2&6&10\\
5&4&2&0&4&8\\
9&8&6&4&0&4\\
13&12&10&8&4&0\\
\end{pmatrix} .$$

In Figure \ref{fig:ex1:0} we show the optimal solutions for the $p$-median, the 
$p$-envy and the $p$-intra-envy problems (from top to bottom). The open facilities are highlighted with gray colour circles.

\begin{figure}[h]
\begin{center}
\fbox{\begin{tikzpicture}[scale=1]

\def \margin {2} 

  \node[draw, circle] (A1) at (1,0) {\tiny $1$};
  \node[draw, circle, fill=gray] (A2) at (2,0) {\tiny $2$};
  \node[draw, circle] (A4) at (4,0) {\tiny $4$};
  \node[draw, circle](A6) at (6,0) {\tiny $6$};
  \node[draw, circle](A10) at (10,0) {\tiny $10$};
  \node[draw, circle, fill=gray](A14) at (14,0) {\tiny $14$};
  
  \path[->] (A1) edge node[above] {\tiny 1} (A2);
  \path[->] (A2) edge[out=135,in=45,looseness=5] node[above] {\tiny 0} (A2);
  \path[->] (A4) edge  node[above] {\tiny 2} (A2);
  \path[->] (A6) edge [out=270, in=270, ,looseness=0.35]  node[below] {\tiny 4} (A2);
 \path[->]  (A10) edge node[above] {\tiny 4} (A14);
 \path[->]  (A14) edge [out=135,in=45,looseness=5]   node[above] {\tiny 0} (A14);

 \node[draw, circle] (B1) at (1,-2) {\tiny $1$};
  \node[draw, circle] (B2) at (2,-2) {\tiny $2$};
  \node[draw, circle, fill=gray] (B4) at (4,-2) {\tiny $4$};
  \node[draw, circle](B6) at (6,-2) {\tiny $6$};
  \node[draw, circle, fill=gray](B10) at (10,-2) {\tiny $10$};
  \node[draw, circle](B14) at (14,-2) {\tiny $14$};
  
  \path[->] (B1) edge[out=270, in=270, ,looseness=0.35]  node[below] {\tiny 3} (B4);
  \path[->] (B2) edge node[above] {\tiny 2} (B4);
  \path[->] (B4) edge [out=135,in=45,looseness=5]   node[above] {\tiny 0} (B4);
  \path[->] (B6) edge  node[above] {\tiny 2} (B4);
 \path[->]  (B10) edge [out=135,in=45,looseness=5]  node[above] {\tiny 0} (B10);
 \path[->]  (B14) edge  node[above] {\tiny 4} (B10);

 \node[draw, circle] (C1) at (1,-4) {\tiny $1$};
  \node[draw, circle, fill=gray] (C2) at (2,-4) {\tiny $2$};
  \node[draw, circle] (C4) at (4,-4) {\tiny $4$};
  \node[draw, circle](C6) at (6,-4) {\tiny $6$};
  \node[draw, circle, fill=gray](C10) at (10,-4) {\tiny $10$};
  \node[draw, circle](C14) at (14,-4) {\tiny $14$};
  
  \path[->] (C1) edge node[above] {\tiny 1} (C2);
  \path[->] (C2) edge [out=135,in=45,looseness=5]  node[above] {\tiny 0} (C2);
  \path[->] (C4) edge  node[above] {\tiny 2} (C2);
  \path[->] (C6) edge  node[above] {\tiny 4} (C10);
 \path[->]  (C10) edge [out=135,in=45,looseness=5]  node[above] {\tiny 0} (C10);
 \path[->]  (C14) edge  node[above] {\tiny 4} (C10);
 
\end{tikzpicture}}
\end{center}
\caption{Solution of the discrete problems of Example \ref{ex:1} (from top to bottom: $p$-median, $p$-envy and 
$p$-intra-envy problems)\label{fig:ex1:0}}
\end{figure}
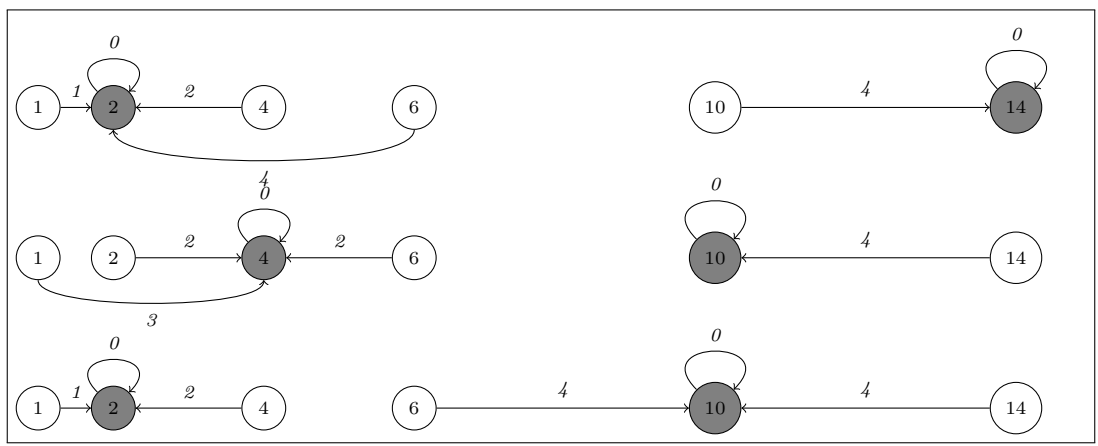

The optimal solution of the $p$-median problem on this instance, where the aim is to minimize  the total distance between plants and their allocated users, is obtained locating plants at sites $2$ and  $14$ and the optimal allocation pattern can be seen in the first picture of the figure.  There, customer located at $1$ is allocated to the facility in position $2$ at a distance of $1$ unit. Customer at $6$, which is allocated to the same plant, and whose distance to the plant is $4$, \emph{feels} envy from $1$ of $4-1=3$ units. However customer at position $10$ does not pay attention to customer at $1$ since it is not allocated to its same plant. Using the notation introduced above, the intra-envy matrix induced for facilities located at positions $2$ and $14$ is
$$
{\rm IE}(2,14) = \begin{pmatrix}
  0 & 1 & 0 & 0 & 0 & 0\\
  0 & 0 & 0 & 0 & 0 & 0\\
  1 & 2 & 0 & 0 & 0 & 0\\
  3 & 4 & 2 & 0 & 0 & 0\\
  0 & 0 & 0 & 0 & 0 & 4\\
  0 & 0 & 0 & 0 & 0 & 0\\
\end{pmatrix}.
$$
Here, each row represents the envy of a customer with respect to each of the other customers. The overall intra-envy is in this case $17$ units.

An optimal solution of the minimum envy problem previously studied in \cite{Espejo&Marin&Puerto&RodriguezChia:2010} for this
instance is shown in the second picture of Figure \ref{fig:ex1:0}. There, instead of computing the envy only with respect to the customers allocated to the same plant, the envy is calculated with respect to all the customers, no matter if the customers are allocated to the same facility or not. For this problem it is optimal to locate plants at $4$ and $10$, and the corresponding intra-envy matrix is in this case
$$
{\rm IE}(2,10) = \begin{pmatrix}
  0 & 1 & 3 & 1 & 0 & 0\\
  0 & 0 & 2 & 0 & 0 & 0\\
  0 & 0 & 0 & 0 & 0 & 0\\
  0 & 0 & 2 & 0 & 0 & 0\\
  0 & 0 & 0 & 0 & 0 & 0\\
  0 & 0 & 0 & 0 & 4 & 0\\
\end{pmatrix}.
$$
The overall intra-envy is now $13$ units.

In the third line of the figure we see the optimal solution to the discrete $p$-intra-envy location problem. The plants have been  located at nodes $2$ and $10$. The envy matrix is
$$
{\rm IE}(2,10)  =\begin{pmatrix}
  0 & 1 & 0 & 0 & 0 & 0\\
  0 & 0 & 0 & 0 & 0 & 0\\
  1 & 2 & 0 & 0 & 0 & 0\\
  0 & 0 & 0 & 0 & 4 & 0\\
  0 & 0 & 0 & 0 & 0 & 0\\
  0 & 0 & 0 & 0 & 4 & 0\\
\end{pmatrix}
$$
being the total intra-envy equal to $12$. Note that, for instance,  user $6$ which is at the same distance from $2$ than from $10$ feels a lower envy to its \emph{neighbours} when it is allocated to $10$ instead of being allocated to $2$, implying an overall smaller intra-envy.

\end{ejem}

\begin{ejem}
In Figure \ref{fig:ex1:1} we show the results for the $2$-facility Weber problem, the $2$-envy problem and the $2$-intra-envy problem on the plane (with cost function measure by the $\ell_1$-norm) for the set of $6$ demand points $\{(8,1), (1,13), (17,11), (18,15), (11,9), (19,7)\}$.

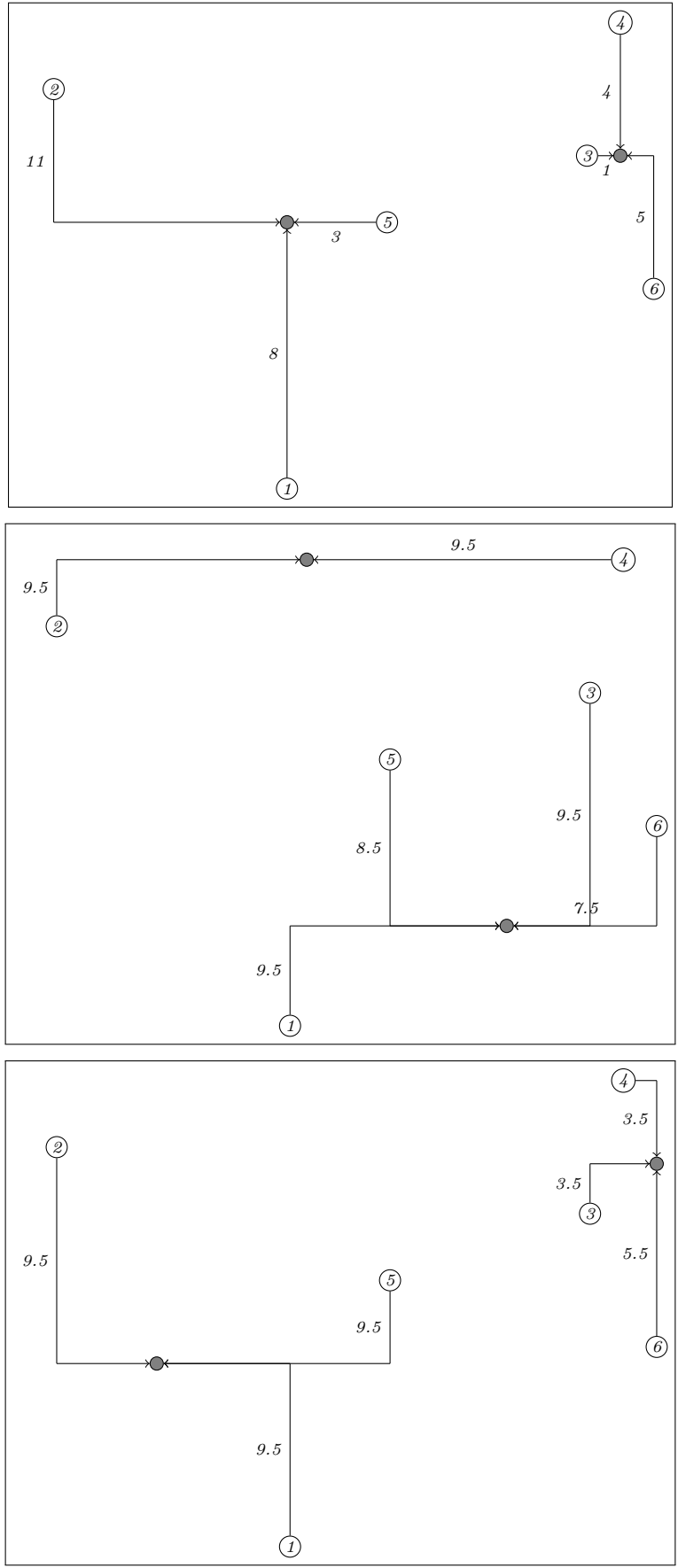
\begin{figure}
\begin{center}
\fbox{\begin{tikzpicture}[scale=0.5]

\def \margin {2} 

\node[draw, circle, inner sep=1pt](A1) at (8.00,1.00) {\tiny 1};
\node[draw, circle, inner sep=1pt](A2) at (1.00,13.00) {\tiny 2};
\node[draw, circle, inner sep=1pt](A3) at (17.00,11.00) {\tiny 3};
\node[draw, circle, inner sep=1pt](A4) at (18.00,15.00) {\tiny 4};
\node[draw, circle, inner sep=1pt](A5) at (11.00,9.00) {\tiny 5};
\node[draw, circle, inner sep=1pt](A6) at (19.00,7.00) {\tiny 6};
\node[draw, circle, fill=gray, inner sep=2pt](X1) at (8.00,9.00) {};
\node[draw, circle, fill=gray,  inner sep=2pt](X2) at (18.00,11.00) {};
\path[->] (A1) edge node[left] {\tiny 8} (X1);
\path (A2) edge node[left] {\tiny 11} (1.00,9.00);
\path[->] (1.00,9.00) edge node[above] {} (X1);
\path[->] (A5) edge node[below] {\tiny 3} (X1);
\path[->] (A3) edge node[below] {\tiny 1} (X2);
\path[->] (A4) edge node[left] {\tiny 4} (X2);
\path (A6) edge node[left] {\tiny 5} (19.00,11.00);
\path[->] (19.00,11.00) edge node[above] {} (X2);
\end{tikzpicture}}\\

\vspace*{0.2cm}

\fbox{\begin{tikzpicture}[scale=0.5]

\def \margin {3} 

\node[draw, circle, inner sep=1pt](A1) at (8.00,1.00) {\tiny 1};
\node[draw, circle, inner sep=1pt](A2) at (1.00,13.00) {\tiny 2};
\node[draw, circle, inner sep=1pt](A3) at (17.00,11.00) {\tiny 3};
\node[draw, circle, inner sep=1pt](A4) at (18.00,15.00) {\tiny 4};
\node[draw, circle, inner sep=1pt](A5) at (11.00,9.00) {\tiny 5};
\node[draw, circle, inner sep=1pt](A6) at (19.00,7.00) {\tiny 6};
\node[draw, circle, fill=gray, inner sep=2pt](X1) at (8.50,15.00) {};
\node[draw, circle, fill=gray, inner sep=2pt](X2) at (14.50,4.00) {};
\path (A2) edge node[left] {\tiny 9.5} (1.00,15.00);
\path[->] (1.00,15.00) edge node[above] {} (X1);
\path[->] (A4) edge node[above] {\tiny 9.5} (X1);
\path (A1) edge node[left] {\tiny 9.5} (8.00,4.00);
\path[->] (8.00,4.00) edge node[above] {} (X2);
\path (A3) edge node[left] {\tiny 9.5} (17.00,4.00);
\path[->] (17.00,4.00) edge node[above] {} (X2);
\path (A5) edge node[left] {\tiny 8.5} (11.00,4.00);
\path[->] (11.00,4.00) edge node[above] {} (X2);
\path (A6) edge node[left] {} (19.00,4.00);
\path[->] (19.00,4.00) edge node[above] {\tiny 7.5} (X2);
 
\end{tikzpicture}}\\

\vspace*{0.2cm}

\fbox{\begin{tikzpicture}[scale=0.5]

\def \margin {2} 

\node[draw, circle, inner sep=1pt](A1) at (8.00,1.00) {\tiny 1};
\node[draw, circle, inner sep=1pt](A2) at (1.00,13.00) {\tiny 2};
\node[draw, circle, inner sep=1pt](A3) at (17.00,11.00) {\tiny 3};
\node[draw, circle, inner sep=1pt](A4) at (18.00,15.00) {\tiny 4};
\node[draw, circle, inner sep=1pt](A5) at (11.00,9.00) {\tiny 5};
\node[draw, circle, inner sep=1pt](A6) at (19.00,7.00) {\tiny 6};
\node[draw, circle, fill=gray, inner sep=2pt](X1) at (4.00,6.50) {};
\node[draw, circle, fill=gray, inner sep=2pt](X2) at (19.00,12.50) {};
\path (A1) edge node[left] {\tiny 9.5} (8.00,6.50);
\path[->] (8.00,6.50) edge node[above] {} (X1);
\path (A2) edge node[left] {\tiny 9.5} (1.00,6.50);
\path[->] (1.00,6.50) edge node[above] {} (X1);
\path (A5) edge node[left] {\tiny 9.5} (11.00,6.50);
\path[->] (11.00,6.50) edge node[above] {} (X1);
\path (A3) edge node[left] {\tiny 3.5} (17.00,12.50);
\path[->] (17.00,12.50) edge node[above] {} (X2);
\path (A4) edge node[left] {} (19.00,15.0);
\path[->] (19.00,15.0) edge node[left] {\tiny 3.5} (X2);
\path[->] (A6) edge node[left] {\tiny 5.5} (X2);
 
\end{tikzpicture}}

\end{center}
\caption{Solution of the continuous problems of Example \ref{ex:1} (from top to bottom: $p$-median, $p$-envy and 
$p$-intra-envy problems)\label{fig:ex1:1}}
\end{figure}

The solution of the $2$-facility Weber problem is shown in the top picture. The facilities to be located are $X_1=(8,9)$ and $X_2=(18,11)$. The intra-envy matrix is in this case
$$
{\rm IE}\left((8, 9), (18,11)\right)  =\begin{pmatrix} 
0 & 0 & 0 & 0 & 5 & 0\\
3 & 0 & 0 & 0 & 8 & 0\\
0 & 0 & 0 & 0 & 0 & 0\\
0 & 0 & 3 & 0 & 0 & 0\\
0 & 0 & 0 & 0 & 0 & 0\\
0 & 0 & 4 & 1 & 0 & 0
\end{pmatrix}
$$
with overall intra-envy equal to $24$. 

In the second picture we show the result of solving a modified version of the problem analyzed in \cite{Espejo&Marin&Puerto&RodriguezChia:2010} for the continuous case. The optimal facilities are located at positions $X_1=(8.5,15)$ and $X_2= (14.5,4)$. The intra-envy matrix in this case is
$$
{\rm IE}\left((8.5, 15), (14.5, 4)\right)  =\begin{pmatrix}
  0 & 0& 0& 0 & 1 & 2\\
  0 & 0& 0& 0 & 0 & 0\\
  0 & 0& 0& 0 & 1 & 2\\
  0 & 0& 0& 0 & 0 & 0\\
  0 & 0& 0& 0 & 0 & 1\\
  0 & 0& 0& 0 & 0 & 0
\end{pmatrix}
$$
with an overall intra-envy of $7$. In this solution, all the demand points  wish to be positioned at the same distance with respect to their closest facilities. Specifically, in all pairwise comparisons between the distances, a maximum difference of 2 units is observed.

Finally, in the bottom picture of Figure \ref{fig:ex1:1} we show the solution of the $2$-intra-envy problem in the continuous case. The two facilities are at $X_1=(4, 6.5)$, and $X_2=(19,12)$, being the intra-envy matrix
$$
{\rm IE}\left((4, 6.5), (19,12)\right)  =\begin{pmatrix}
  0 & 0 & 0 & 0 & 0 & 0\\
  0 & 0 & 0 & 0 & 0 & 0\\
  0 & 0 & 0 & 0 & 0 & 0\\
  0 & 0 & 0 & 0 & 0 & 0\\
  0 & 0 & 0 & 0 & 0 & 0\\
  0 & 0 & 2 & 2 & 0 & 0\\
\end{pmatrix}
$$
with an overall intra-envy of $4$ units. The envy is computed only comparing pairs of demand points allocated to the same facility. The ideal solution is that in which all the points allocated to a facility are at the same distance to it. In the  optimal solution, points numbered as $1$, $2$ and $5$  are allocated to the same facility at the same distance, and then the envy among those points is zero.
\end{ejem}

\section{The Continuous Minimum $p$-Intra-Envy Facility Location Problem}\label{continuo}

In this section we will study the $p$-\IEFLP\ assuming that the $p$ facilities to be located are allowed to be positioned in the whole space. In this case, we assume that the preference function of user $i$ with respect to a facility located at $X\in \R^d$ is given by the $\ell_1$-norm based distance in $\R^d$, i.e.,
$$
\Phi_i(X) =\| a_i- X\|_1 = \dsum_{l=1}^d |a_{il} - X_l|.
$$

We provide three alternative mathematical programming formulations for the problem. The first formulation is based on representing the envy as pairwise differences of distances.
 In contrast, in the second and third formulations, we exploit the structure of the intra-envy as an ordered median function of the distances. In both formulations, in order to adequately represent the distance in terms of the variables defining the positions of the facilities (which are part of the decision), we use the following sets of decision variables:
$$
X_{j\ell}: \text{$\ell$-th coordinate of the $j$-th facility, } \forall j\in P, \ell=1, \ldots, d.
$$
$$
\phi_{ij}=\Phi_i(X_j) = \|a_i-X_j\|_1,\ \ \forall i\in N, j \in P.
$$
We also use the allocation variables $x_{ij}$ described in the previous section.

\subsection*{First formulation for the Continuous $p$-\IEFLP}

In the first formulation, in addition, we use the following variables to model the intra-envy between two customers
$i$ and $k$ for the set of $p$ facilities
 $\mathbf{X}=\{X_1, \ldots, X_p\}$:
$$
\theta_{ik} = \IE_{ik}(\mathbf{X}), \ \ \forall i, k \in N (k>i).
$$

With the above sets of variables, the Continuous $p$-\IEFLP\ problem can be formulated as the following mathematical programming problem that we denote as (${\rm M}_1^C$):

\begin{align}
\min \; & \sum_{i\in N} \sum_{k\in N:\atop k>i} \theta_{ik} \label{cmiemp:obj} \\
\mbox{s.t. } &  
\sum_{j=1}^p x_{ij} =1,\ \ \forall i\in N,\label{cmiemp:x}\\
&\theta_{ik} \ge \phi_{ij}-\phi_{kj}- U (2-x_{ij}-x_{kj}),\ \ \forall i<k \in N, j \in P, \label{cmiemp:theta1}\\
&\theta_{ik} \ge \phi_{kj}-\phi_{ij}- U(2-x_{ij}-x_{kj}),\ \ \forall i<k \in N, j \in P, \label{cmiemp:theta2}\\
& \phi_{ij} \le \phi_{il}+ U (1-x_{ij}),\ \ \forall i \in N, j\neq \ell \in P,\label{cmiemp:close}\\
& \phi_{ij} = \Phi_i(X_j),\ \ \forall i \in N, j \in P,\label{cmpiemp:d}\\
& X_1, \ldots, X_p \in \R^d, \\
& \theta_{ik} \ge 0,\ \ \forall i,k\in N, k>i,\\
& x_{ij}\in \{0,1\},\ \ \forall i\in N, j \in P.\label{cmiemp:f}
\end{align}


\noindent where $U$ is a big enough constant. The reader may note that different $U's$ could be estimated for the different families of constraints, namely \eqref{cmiemp:theta1}-\eqref{cmiemp:theta2}  and \eqref{cmiemp:close}, although we keep it simpler for the sake of presentation.

In this model the objective function \eqref{cmiemp:obj} accounts for the intra-envy 
felt by every client with respect to all other clients. Constraints  \eqref{cmiemp:theta1}  and \eqref{cmiemp:theta2} allow to adequately represent the envy between users $i$ and $k$, i.e., either $\IE_{ik}(X)=\Phi_i(X_j)-\Phi_k(X_j)$ or $\IE_{ki}(X)=\Phi_k(X_j)-\Phi_i(X_j)$ in case $i$ and  $k$ are allocated to facility $j$. Constraints \eqref{cmiemp:close} assure the closest-assignment assumption. Constraints \eqref{cmpiemp:d} are the representation of the $\ell_1$ distances between customers and the facilities. For $i\in N$ and $j\in P$,  the constraint can be linearly modeled as follows:

\begin{align}
&w_{ij\ell} \leq a_{i\ell} - X_{j\ell}  + U (1-\xi_{ij\ell}),\ \ \forall  \ell=1, \ldots, d \label{mfall1:w1}\\
& w_{ij\ell} \geq a_{i\ell} - X_{j\ell},\ \ \forall \ell=1, \ldots, d,\label{mfall1:w2}\\
& w_{ij\ell} \leq -a_{i\ell} + X_{j\ell} + U \xi_{ij\ell},\ \  \forall \ell=1, \ldots, d,\label{mfall1:w3}\\
& w_{ij\ell} \geq -a_{i\ell} + X_{j\ell},\ \  \forall \ell=1, \ldots, d,\label{mfall1:w4}\\
& \phi_{ij} = \dsum_{\ell=1}^d w_{ij\ell}, \label{mfall1:d}\\
&w _{ij\ell} \geq 0,\ \ \forall  \ell=1, \ldots, d,\nonumber\\
&\xi _{ij\ell} \in \{0,1\},\ \ \forall \ell=1, \ldots, d\nonumber
\end{align}
where two sets of additional auxiliary variables are used:  $w_{ij\ell}=|a_{i\ell}-X_{j\ell}|$, and $\xi_{ij\ell}$ that takes values $1$ in case $w_{ij\ell}=a_{i\ell}-
X_{j\ell} \geq 0$ and zero otherwise. Constraints \eqref{mfall1:w2} and \eqref{mfall1:w4} ensure that $w_{ij\ell} \geq \max\{a_{i\ell}-X{j\ell}, -a_{i\ell}+X_{j\ell}\}$ while constraints \eqref{mfall1:w1} and \eqref{mfall1:w3}  assure that $w_{ij\ell} \leq \max\{a_{i\ell}-X_{j\ell}, -a_{i\ell}+X_{j\ell}\}$, defining adequately the absolute value $|a_{i\ell}-X_{j\ell}|$. Constraints \eqref{mfall1:d} define the $\ell_1$ distance by means of the sum of the absolute values of the differences at all the coordinates between the demand point and the facility.  Note that with this representation of the distance, $\|X_j-a_i\|_1$ can be expressed by the sum of the adequate $w$-variables \eqref{mfall1:d}.

\subsection*{A $k$-sum based formulation for the Continuous $p$-\IEFLP }
The second formulation that we propose is based on the following observation.

\begin{lem}
Let $\phi_{kj} =\Phi_i(X_k)$ if $a_i$ is allocated to $X_k$ and zero otherwise,  $\phi_{(k)j}$ the $k$-th largest distance in the sequence of distances from all the demand points to $X_j$, and  $k_j$ the number of points allocated to the $j$th facility.
The intra-envy function can be written as:
$$
\dsum_{i\in N}\dsum_{k\in N} \IE_{ik}(X_1, \ldots, X_p) = \dsum_{j\in P} \dsum_{k\in N} (k_j-2k+1) \phi_{(k)j}.
$$
\end{lem}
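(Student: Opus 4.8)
The plan is to reduce the double sum defining the total intra-envy to a sum that separates over the facilities, and then to recognize the contribution of each facility as the sum of pairwise absolute differences of the allocation distances, which admits a closed form in terms of order statistics.

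First I would use the rewriting of the objective already recorded in the excerpt, namely
$$
\dsum_{i\in N}\dsum_{k\in N}\IE_{ik}(\mathbf{X}) = \dsum_{i=1}^{n-1}\dsum_{\substack{k=i+1\\ j(k)=j(i)}}^{n} |\Phi_i(X_{j(i)})-\Phi_k(X_{j(k)})|,
$$
and group the terms according to the common facility to which both points are allocated. Since a pair $(i,k)$ contributes only when $j(i)=j(k)=j$ for some $j\in P$, and in that case it contributes $|\phi_{ij}-\phi_{kj}|$ with $\phi_{ij}=\Phi_i(X_j)$, the total intra-envy equals $\sum_{j\in P} S_j$, where $S_j$ denotes the sum of $|\phi_{ij}-\phi_{kj}|$ over all unordered pairs of demand points allocated to $X_j$.

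Next I would fix a facility $j$ and express $S_j$ through the order statistics of its allocation distances. Relabelling the $k_j$ points allocated to $X_j$ so that their distances are sorted decreasingly as $\phi_{(1)j}\ge\phi_{(2)j}\ge\cdots\ge\phi_{(k_j)j}$, each absolute value in $S_j$ becomes a plain difference $\phi_{(s)j}-\phi_{(t)j}$ for $s<t$. The core computation is then a sign count: the $k$-th largest distance $\phi_{(k)j}$ plays the role of the larger element in exactly $k_j-k$ pairs and of the smaller element in exactly $k-1$ pairs, so its net coefficient in $S_j$ is $(k_j-k)-(k-1)=k_j-2k+1$. This gives
$$
S_j = \dsum_{k=1}^{k_j}(k_j-2k+1)\,\phi_{(k)j}.
$$

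Finally I would extend the inner summation from $k_j$ to all of $N$. With the stated convention that $\phi_{(k)j}=0$ for the indices $k>k_j$ corresponding to points not allocated to $X_j$, the extra terms $(k_j-2k+1)\cdot 0$ vanish, so $S_j=\dsum_{k\in N}(k_j-2k+1)\phi_{(k)j}$; summing over $j\in P$ yields the claimed identity. The only genuine point to get right is the combinatorial sign count producing the coefficient $k_j-2k+1$. Once the pairwise differences are sorted this is routine, and everything else is bookkeeping; I would also keep an eye on the degenerate cases $k_j\le 1$, where $S_j=0$ and the formula holds trivially.
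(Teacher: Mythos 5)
Your argument is correct and follows essentially the same route as the paper's proof: decompose the total intra-envy by facility, sort each facility's allocation distances in non-increasing order, and obtain the coefficient $k_j-2k+1$ of $\phi_{(k)j}$ — the paper derives it by summing the envy of the $k$-th ranked point over all lower-ranked points and then collecting terms, which is just a reorganization of your direct sign count of the $k_j-k$ pairs where $\phi_{(k)j}$ is the larger element and the $k-1$ pairs where it is the smaller. Your explicit remarks on extending the sum to all of $N$ via the zero padding and on the degenerate cases $k_j\le 1$ are fine and consistent with the paper.
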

\begin{proof}
Let $X_1, \ldots, X_p \in \R^d$ be the chosen facilities, and $\phi_{\cdot j} = (\phi_{1j}, \ldots, \phi_{nj})$ the allocation costs of all the demand points to $X_j$ (assuming that the allocation cost of a demand point non allocated to $X_j$ is zero). Sorting $\phi_j$ in non  increasing order results in the vector $(\phi_{(1)j}, \phi_{(2)j}, \ldots, \phi_{(k_j)j}, 0, \dots, 0)$ with $\phi_{(1)j} \geq \phi_{(2)j}\geq \cdots \geq \phi_{(k_j)j} \geq 0$.

With this notation, the intra-envy of the demand point sorted in the $k$th  position for facility $j\in P$ can be computed as:
\begin{eqnarray*}
\begin{split}
\dsum_{i=k+1}^{k_j} (\phi_{(k)j} - \phi_{(i)j}) &= (k_j-k) \phi_{(k)j} - \dsum_{i=k+1}^{k_j} \phi_{(i)j}\\
&=  (k_j-k) \phi_{(k)j} - \phi_{(k+1)j} - \cdots - \phi_{(k_j)j}.
\end{split}
\end{eqnarray*}
Adding up the above expression for all $k$ we get that the intra-envy for the $j$th facility can be written as:
\begin{eqnarray*}
\begin{split}
\dsum_{k=1}^{k_j-1}\dsum_{i=k+1}^{k_j} (\phi_{(k)j} - \phi_{(i)j}) &= \dsum_{k=1}^{k_j-1} \Big((k_j-k) \phi_{(k)j} - \phi_{(k+1)}^j - \cdots - \phi_{(k_j)j}\Big)\\
&= (k_j-1) \phi_{(1)}^j + (k_j-3) \phi_{(2)}^j + \cdots + (1-k_j) \phi_{(k_j)j}\\
&= \dsum_{k=1}^{k_j} (k_j-2k+1) \phi_{(k)j} = \dsum_{k\in N} (k_j-2k+1) \phi_{(k)j}.
\end{split}
\end{eqnarray*}
\end{proof}

The above result identifies the objective function of $p$-\IEFLP\ as an ordered median function of the allocation costs given by $\Phi$ (see \cite{Mesa&Puerto&Tamir:2003}). This type of functions has been widely studied in Location Science 
(see e.g., \cite{Marin&Nickel&Puerto:2009}) and several representations are possible to embed these 
sortings in a mathematical programming formulation.
 One of the most popular representations is through the so-called $k$-sums which is based on expressing the ordered median function as a weighted sum of $k$-sums $S_k(\phi_{\cdot j})=\dsum_{\ell=1}^k \phi_{(\ell)j}$.
\begin{lem}
Let $j\in P$, $X_j \in \R^d$ and $\phi_{\cdot j} = (\phi_{1j}, \ldots, \phi_{nj})$ the allocation costs of all the demand points to $X_j$. Then:
$$
\dsum_{k\in N} (k_j-2k+1) \phi_{(k)j} = 2 \dsum_{k\in N: k<k_j}  \dsum_{\ell=1}^k \phi_{(\ell)j} - (2n+1-k_j) \dsum_{\ell=1}^{n} \phi_{(\ell)j}.
$$
\end{lem}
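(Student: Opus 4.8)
The plan is to recognize the left-hand side as a linear functional of the order statistics $\phi_{(1)j}\ge\cdots\ge\phi_{(n)j}$ and to convert it into a combination of the cumulative $k$-sums $S_k:=\dsum_{\ell=1}^k\phi_{(\ell)j}$ by a summation-by-parts (Abel) argument. Writing $\lambda_k:=k_j-2k+1$ for the weight attached to the $k$-th largest cost, the target identity is precisely the assertion that $\dsum_{k\in N}\lambda_k\phi_{(k)j}$ admits the displayed $k$-sum representation, so the whole proof reduces to a discrete integration by parts together with careful bookkeeping of the zero-padding of the sorted vector.

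First I would use the telescoping relation $\phi_{(k)j}=S_k-S_{k-1}$ (with the convention $S_0=0$), valid for every $k\in N$. Substituting this into $\dsum_{k=1}^n\lambda_k\phi_{(k)j}$ and reindexing the shifted copy of the sum yields the Abel identity
\[
\dsum_{k=1}^n\lambda_k(S_k-S_{k-1})=\lambda_n S_n+\dsum_{k=1}^{n-1}(\lambda_k-\lambda_{k+1})S_k .
\]
The key simplification is that the consecutive weight differences are \emph{constant}: $\lambda_k-\lambda_{k+1}=(k_j-2k+1)-(k_j-2k-1)=2$ for every $k$. This is exactly what produces the factor $2$ multiplying the $k$-sums and flattens the inner structure, leaving only the boundary weight $\lambda_n=k_j-2n+1$ attached to the full sum $S_n=\dsum_{\ell=1}^n\phi_{(\ell)j}$.

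The remaining step exploits the zero-padding: since $\phi_{(\ell)j}=0$ for $\ell>k_j$, the cumulative sums stabilize, i.e.\ $S_k=S_{k_j}=S_n$ for all $k\ge k_j$. This lets me split $\dsum_{k=1}^{n-1}S_k$ into the genuinely varying part $\dsum_{k<k_j}S_k$ and a block of $n-k_j$ identical copies of $S_n$, which I then absorb into the coefficient of the full $n$-sum together with the boundary term $\lambda_n S_n$. Collecting the resulting multiplier of $\dsum_{\ell=1}^n\phi_{(\ell)j}$ delivers the stated $k$-sum representation.

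I expect the only delicate point to be this last bookkeeping: because $k_j$ is data-dependent (the number of demand points assigned to facility $j$), one must track exactly how many stabilized copies of $S_n$ appear and combine them correctly with $\lambda_n$ to land on the stated coefficient, rather than on the equivalent ``$\dsum_{k<n}$'' form that the plain Abel identity first delivers. I would confirm the final constant on a small instance (say three demand points, two of them allocated) by comparing against the closed form of the intra-envy produced by the previous lemma, which guards against sign or off-by-one slips in collecting the coefficient.
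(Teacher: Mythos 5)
Your Abel-summation route is, in substance, the same argument as the paper's: the coefficients $\Delta_k^j$ the paper introduces (equal to $2$ for $k<k_j$, to $1-k_j$ at $k=k_j$, and to $0$ afterwards) are precisely the first differences of your weights $\lambda_k=k_j-2k+1$, truncated at $k_j$ where the zero-padding kills the remaining order statistics. Both proofs regroup the weighted order statistics into the cumulative $k$-sums $S_k=\dsum_{\ell=1}^k\phi_{(\ell)j}$, and your intermediate identity
\begin{equation*}
\dsum_{k=1}^n\lambda_k(S_k-S_{k-1})=\lambda_nS_n+2\dsum_{k=1}^{n-1}S_k
\end{equation*}
is correct.

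The problem is the final bookkeeping step, and it is fatal to the claim as you state it. Carrying out your own split $\dsum_{k=1}^{n-1}S_k=\dsum_{k<k_j}S_k+(n-k_j)S_n$ and combining with $\lambda_n=k_j-2n+1$ yields
\begin{equation*}
2\dsum_{k<k_j}S_k+\bigl(2(n-k_j)+k_j-2n+1\bigr)S_n=2\dsum_{k<k_j}S_k+(1-k_j)S_n,
\end{equation*}
which differs from the stated right-hand side $2\dsum_{k<k_j}S_k-(2n+1-k_j)S_n$ by $2(n+1-k_j)S_n\neq 0$. No amount of careful collection of terms can close this, because the displayed statement is itself misprinted: the restriction $k<k_j$ on the first sum is incompatible with the constant $2n+1-k_j$. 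That constant belongs with the \emph{unrestricted} sum $2\dsum_{k\in N}S_k$, which is the form the paper's proof actually ends with (and the form used immediately afterwards to rewrite the objective of the problem); it follows from your expression at once via $2\dsum_{k=1}^{n-1}S_k=2\dsum_{k\in N}S_k-2S_n$. Equivalently, the restricted sum pairs with the constant $1-k_j$, which is the paper's intermediate line. A concrete check with $n=3$, $k_j=2$, $\phi_{(1)j}=2$, $\phi_{(2)j}=1$, $\phi_{(3)j}=0$ gives left-hand side $1$ and both corrected forms equal to $1$, while the printed right-hand side gives $2\cdot 2-5\cdot 3=-11$. The numerical sanity check you proposed in your last paragraph would have exposed exactly this; you should run it before asserting that the collected multiplier ``delivers the stated representation,'' and your write-up should prove one of the two correct forms rather than the hybrid in the statement.
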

\begin{proof}
Observe that defining:
$$
\Delta_k^j = \begin{cases}
2 & \mbox{if $k<k_j$},\\
1- k_j & \mbox{if $k=k_j$},\\
0 & \mbox{otherwise,}
\end{cases}
$$we get that
\begin{eqnarray*}
\begin{split}
\dsum_{k\in N} (k_j-2k+1) \phi_{(k)j} &=\dsum_{k\in N} \Delta_k^j S_k(\phi_{\cdot j})=  \dsum_{k\in N} \Delta_k^j \dsum_{\ell=1}^k \phi_{(\ell)j}\\
&= 
2 \dsum_{k\in N: k<k_j}  S_k(\mathbf{\phi}^j)+ (1-k_j) S_{k_j}(\mathbf{\phi}^j)\\
&= 2 \dsum_{k\in N}  S_k(\mathbf{\phi}^j) - 2(n-k_j+1)\dsum_{\ell=1}^{n} \phi_{\ell j}+ (1-k_j)\dsum_{\ell=1}^{n} \phi_{\ell j} \\
&= 2 \dsum_{k\in N}  S_k(\phi_{\cdot j}) - (2n+1-k_j)\dsum_{\ell=1}^{n} \phi_{\ell j} 
\end{split}
\end{eqnarray*}
since $\phi_{(k)j} =0$ for all $k>k_j$ and then $S_{k}(\phi_{\cdot j})=S_{k_j}(\phi^j)= \dsum_{l=1}^n \phi_{lj}$. In the last equation the expression $\dsum_{\ell=1}^{n} \phi_{\ell j}$ is added up and removed to aggregate in the first addend all the $k$-sums.
\end{proof}

With the above results, the objective function of $p$-\IEFLP\ can be rewritten as:
$$
\dsum_{i\in N}\dsum_{k\in N} \IE_{ik}(\mathbf{X})
 = 2 \dsum_{j\in P} \dsum_{k\in N}  \dsum_{\ell=1}^k \phi_{(\ell) j}  - \dsum_{j\in P}  (2n+1-k_j) \dsum_{\ell=1}^{n} \phi_{\ell j}.
$$

Different representations of $k$-sums are possible when they are incorporated to optimization problems (see e.g., \cite{Blanco&Puerto&ElHaj:2014,MarinPoncePuerto20,Ogryczak&Tamir:2003,PuertoChiaTamir17}).
Specifically, we use the one proposed in \cite{Blanco&Puerto&ElHaj:2014} to derive the following mathematical formulation for the continuous $p$-EIFLP\ where, additionally to the above mentioned variables, we use the set of auxiliary variables $\alpha_{ij} = \sum_{\ell\in N} \phi_{\ell j} x_{ij}$ in order to represent the expression $k_j \dsum_{\ell=1}^{n} \phi_{\ell j} = \Big(\dsum_{i\in N} x_{ij}\Big) \dsum_{\ell\in N} \phi_{\ell j} = \dsum_{i, \ell \in N} x_{ij} \phi_{\ell j}$ in the objective function. The following formulation for the problem was denoted by ${\rm (M_2)}^C$.

\begin{align}
\min &\;\; 2 \dsum_{j\in P} \dsum_{k\in N} \left(\dsum_{\ell \in N} u_{k\ell j} + \dsum_{i\in N} v_{kij}\right) - (2n+1)\dsum_{j\in P}\dsum_{i \in N} \phi_{ij} + \dsum_{j\in P}\dsum_{i\in N} \alpha_{ij} \label{omp:obj}\\
\mbox{s.t. } & \dsum_{j\in P} x_{ij}=1\ \ \forall i\in N,\label{omp:1}\\
&w_{ij\ell} \leq a_{i\ell} - X_{j\ell}  + U (1-\xi_{ij\ell})\ \ \forall i\in N, j\in P,  \ell=1, \ldots, d \label{omp:2}\\
& w_{ij\ell} \geq a_{i\ell} - X_{j\ell}\ \ \forall i\in N, j\in P,\ell=1, \ldots, d,\label{omp:3}\\
& w_{ij\ell} \leq -a_{i\ell} + X_{j\ell} + U \xi_{ij\ell}\ \  \forall i\in N, j\in P, \ell=1, \ldots, d\label{omp:4}\\
& w_{ij\ell} \geq -a_{i\ell} + X_{j\ell}\ \  \forall i\in N, j\in P, \ell=1, \ldots, d,\label{omp:5}\\
& \phi_{ij} \leq \dsum_{l=1}^d w_{ijl} +  U(1-x_{ij})\ \ \forall i\in N, j\in P,\label{omp:6}\\
& \phi_{ij} \geq \dsum_{l=1}^d w_{ijl} - U(1-x_{ij})\ \ \forall i\in N, j\in P,\label{omp:7}\\ 
& \phi_{ij} \leq U x_{ij}\ \ \forall i\in N, j\in P,\label{omp:8}\\
& \alpha_{ij} \geq \dsum_{\ell\in N}\phi_{\ell j} - U (1-x_{ij})\ \ \forall i, \ell\in N, j\in P,\label{omp:14}\\
& u_{k\ell j} + v_{kij} \geq \phi_{ij}\ \ \forall i, k\in N, j\in P,\label{omp:10}\\
& u_{kij}, v_{kij}\geq 0\ \ \forall k, i \in N, j\in P,\\
& \phi_{ij} \geq 0, x_{ij} \in \{0,1\}\ \ \forall i\in N, j\in P,\\
& \alpha_{ij} \geq 0\ \ \forall i\in N, j\in P.\label{omp:f}
\end{align}

The objective function represents the overall $p$-intra-envy 
 as detailed in the above comments. Constraints \eqref{omp:1} are the single-allocation constraints. Contraints \eqref{omp:2}-\eqref{omp:5} ensure the correct definition of the absolute values $w_{ij\ell} = |a_{i\ell}-X_{j\ell}|$ required to derive the $\ell_1$ distances between demand points and the facilities. Constraints \eqref{omp:6} and \eqref{omp:7} assure that, in case the demand point $a_i$ is allocated to facility $j$, then, the cost of allocating such a point to that facility, $\phi_{ij}$, is defined as the $\ell_1$-norm based distance between $a_i$ and $X_j$. Otherwise, by constraints \eqref{omp:8} the cost $\phi_{ij}$ is fixed to zero. 
 Constraints \eqref{omp:14}
 (and the minimization of the $\alpha$-variables) ensure the correct definition of the $\alpha$-variables. Finally, contraints \eqref{omp:10} allow computing adequately the $k$-sums. 

\begin{rmk}
Apart from the $k$-sum representation applied above based on \cite{Blanco&Puerto&ElHaj:2014}, other representations are possible. Specifically, in \cite{Ogryczak&Tamir:2003} the authors provide an alternative formulation which has been widely used 
 in the literature. There, it is proved that
\begin{align*}
&S_k(\phi_{\cdot j}) = & \min &\;\;  k t_{kj} + \dsum_{i\in N} w_{kij}&\\
&&\hbox{s.t. } &t_{kj} +  w_{kij} \geq \phi_{ij}\ \ \forall i\in N&\\
&&& t_{kj}\geq 0,\\
&&& w_{kij} \geq 0\ | \forall i\in N.&
\end{align*}
This representation can be embedded in the following formulation that we call ${\rm (M_3)}^C$:
\begin{align}
\min &\;\; 2 \dsum_{j\in P} \dsum_{k\in N}  \Big(k t_{kj} + \dsum_{i\in N} w_{kij}\Big) - (2n+1)\dsum_{j\in P}\dsum_{i \in N} \phi_{ij} + \dsum_{j\in P}\dsum_{i\in N} \alpha_{ij} \label{ot:obj}\\
\hbox{s.t. } & \eqref{omp:1}-\eqref{omp:14}\\
& t_{kj} +  w_{kij} \geq \phi_{ij}\ \ \forall i, k\in N, j\in P&\\
& \phi_{ij} \geq 0, x_{ij} \in \{0,1\}\ \ \forall i\in N, j\in P,\\
& t_{kj}\geq 0\ \ \forall k\in N, j\in P,\\
& w_{kij} \geq 0\ \ \forall i\in N, k\in N, j\in P,\\
& \alpha_{ij} \geq 0\ \ \forall i\in N, j\in P.\label{ot:f}
\end{align}
\end{rmk}

\section{The Discrete Minimum $p$-Intra-Envy Facility Location Problem} \label{discreto}

In this section we analyze the case in which the set of potential positions for the facilities is a finite set, i.e., $\X=\{b_1, \ldots, b_m\} \subseteq \R^d$. We denote by $M=\{1, \ldots, m\}$ the index set for the potential facilities. We assume that  $1\le p\le m-1$ plants have to be located.

In this situation, the distances/costs between the users and all the potential sites can be computed in a preprocessing phase. We denote by $C=(\phi_{ij})_{n\times m}$ a costs matrix, 
 where $\phi_{ij} = \Phi_i(b_j)$ is the measure of the dissatisfaction user $i$ will feel if he is allocated to site $j$.

The problem in this case simplifies choosing
 $p$ facilities out of the $m$ potential facilities minimizing the overall envy of the customers. Thus, apart from the $x$-variables indicating the allocation of users to plants and the $\theta$-variables used to model the envy between customers, already defined in the previous sections, we use the following set of variables:
$$
 y_j  =
\begin{cases}
   1 & \mbox{if a plant is located at site $j$,} \\
   0 & \mbox{otherwise,}
\end{cases} \quad \forall j\in M.
$$

With the above notation, the Discrete Minimum $p$-Intra-Envy Facility Location Problem 
 can be formulated as the following mixed integer linear programming problem that we denote by (${\rm M}_1^D$):

\begin{align}
 \min &\;\; \sum_{i=1}^{n-1} \sum_{k=i+1}^n \theta_{ik} \label{pm:0}\\
\mbox{s.t.} & \sum_{j\in M} y_j = p, \ \ \label{pm:1}\\
& \sum_{j\in M} x_{ij} =1, \ \ \forall i\in N, \label{pm:2}\\
& x_{ij}\le y_j, \ \ \forall i\in N,\ j\in M, \label{pm:3}\\
&y_j + \sum_{\ell \in M:\atop \phi_{ij}<\phi_{i\ell}} x_{i\ell} \le 1, \ \ \forall i\in N,\ j\in M, \label{cac}\\
 & \theta_{ik} \ge |\phi_{ij}-\phi_{kj}|(x_{ij} + x_{kj} - 1), \ \ \forall i<k\in N,\ j\in M,  \label{teta1}\\
& y_i\in \{ 0,1\}, \ \ \forall j\in M, \\
&x_{ij}\in \{ 0,1\}, \ \ \forall i\in N,\ j\in M.\label{pm:f}
\end{align}

Constraints \eqref{pm:1}-\eqref{pm:3} are the classical $p$-median constraints that assure that $p$ services are open, each client is allocated to a single plant and customers are allowed to be allocated only to open plants. 
 Constraints \eqref{cac} are the closest assignment constraints. 
They avoid allocating a customer to plants which are less desired than others that are open.  Constraints \eqref{teta1} ensure the adequate definition of the envy variables $\theta$. Note that $\theta_{ik}$ takes value $|\phi_{ij}-\phi_{ik}|$ in case
$i$ and $k$ are allocated to a common plant $j$, i.e, whenever $x_{ij} \cdot x_{kj} = 1$ (term which is linearized in the constraint as $x_{ij}+x_{kj}-1$). These 
constraints can be strengthened by the following ones:
\begin{equation} \label{tetay}
  \theta_{ik} \ge |\phi_{ij}-\phi_{kj}|(x_{ij} + x_{kj} - y_j), \ \ \ \ \ \forall i<k\in N,\ j\in M, 
\end{equation}
since $y_j$ will take value 1 whenever $x_{ij}x_{kj}$ take value 1.

The above formulation is based on the classical variables and constraints of discrete location problems. A reduced formulation can be derived using only the $y$-variables, with the following formulation that we call (${\rm F1}^D$):

\begin{align}
\min &\;\; \sum_{i=1}^{n-1} \sum_{k=i+1}^n \theta_{ik} \label{m2:obj} \\
 \mbox{s.t.} & \sum_{j\in M} y_j = p, \\
 & \theta_{ik} \ge |\phi_{ij}-\phi_{kj}|
  (y_j-\sum_{\ell\in M:\atop {\phi_{i\ell}<\phi_{ij}\hbox{\ \tiny or}\atop \phi_{k\ell}<\phi_{kj}}} y_\ell), \ \ \forall i<k\in N,\ j\in M,  \label{teta2}\\
  & \theta_{ik} \geq 0, \forall i<k \in N,\\
  & y_{j} \in \{0,1\}, \forall j \in M.\label{m2:f}
\end{align}

Constraints \eqref{teta2} make $\theta_{ik}$ to take value $|\phi_{ij}-\phi_{kj}|$ when (i) plant $j$ 
is opened, and (ii) no other plant $\ell$ is opened if it is closer to $i$ or $k$ than plant $j$. This 
means that the closest plant (or one of the closest plants in case of tie) to both, $i$ and $k$, is
$j$, and therefore $i$ and $k$ will be both allocated to $j$ and the envy of this allocation will be  (at least, in case of tie) $|\phi_{ij}-\phi_{kj}|$.

The following set of valid inequalities can be used to tight the formulation:
$$\theta_{ik}\ge \sum_{j\in J} |\phi_{ij}-\phi_{kj}|(y_j - \dsum_{{\ell \in M :\atop \phi_{i\ell}<\phi_{ij}\hbox{\tiny or }} \atop \phi_{k\ell}<\phi_{kj}} y_\ell), \quad \forall J\subseteq M.$$

These inequalities were incorporated to (${\rm F1}^D$) sequentially in the branch-and-bound tree by separating them with the following strategy. Let $\bar y\in [0,1]^m$ and $\bar \theta \in \R_+^{n\times n}$ be a feasible relaxed solution. 
 For each $i, k \in N$, let $J_{ik} = \{j\in M: \bar y_j > \dsum_{{\ell \in M :\atop \phi_{i\ell}<\phi_{ij}\hbox{\tiny or }} \atop \phi_{k\ell}<\phi_{kj}} \bar y_\ell \}$ and $\rho_{ik} =\dsum_{j\in J_{ik}} |\phi_{ij}-\phi_{kj}|(\bar y_j - \sum_{{\ell \in M :\atop \phi_{i\ell}<\phi_{ij}\hbox{\tiny or }} \atop \phi_{k\ell}<\phi_{kj}} \bar y_\ell)$. If $\bar \rho_{ik} > \bar \theta_{ik}$, then, incorporate the cut:
$$\theta_{ik}\ge \sum_{j\in J_{ik}} |\phi_{ij}-\phi_{kj}|(y_j - \sum_{{\ell \in M :\atop \phi_{i\ell}<\phi_{ij}\hbox{\tiny or }} \atop \phi_{k\ell}<\phi_{kj}} y_\ell).$$

\subsection{$k$-sums based formulation}

With the same ideas applied to reformulate the continuous problem  into an ordered median problem,
we get that:
$$
\dsum_{i\in N}\dsum_{k\in N} \IE_{ik}(\mathbf{X})
 = 2 \dsum_{j\in P} \dsum_{k\in N}  \dsum_{\ell=1}^k \phi_{(\ell)j}  - \dsum_{j\in P}  (2n+1-k_j) \dsum_{\ell=1}^{n} \phi_{\ell j} x_{\ell j}
$$
where $k_j$ is the number of customers allocated to plant $j$. Thus the following formulation that we call (${\rm M}_3^D$) is valid for the problem:

\begin{align}
\min &\;\; 2 \dsum_{j\in P} \dsum_{k\in N} \left(\dsum_{\ell \in N} u_{klj} + \dsum_{i\in N} v_{kij}\right) - (2n+1)\dsum_{j\in P}\dsum_{i \in N} \phi_{ij} x_{ij} + \dsum_{j\in P}\dsum_{i \in N} \phi_{ij}\alpha_{ij} \label{m3:obj}\\
\mbox{s.t. } &  \sum_{j\in M} y_j = p, \ \ \label{pmd:1}\\
& \sum_{j\in M} x_{ij} =1, \ \ \forall i\in N, \label{pmd:2}\\
& x_{ij}\le y_j, \ \ \forall i\in N,\ j\in M, \label{pmd:3}\\
&y_j + \sum_{\ell \in M:\atop \phi_{ij}<\phi_{i\ell}} x_{i\ell} \le 1, \ \ \forall i\in N,\ j\in M, \label{cacd}\\
& \alpha_{ij} \geq \dsum_{\ell\in N} x_{\ell j} -  (n-p)(1-x_{ij}),\ \ \forall i, \ell\in N, j\in P,\label{ompd:14}\\
& u_{k\ell j} + v_{kij} \geq \phi_{ij} x_{ij},\ \ \forall i, \ell, k\in N (\ell\leq k), j\in P,\label{ompd:10}\\
& u_{kij}, v_{kij}\geq 0,\ \ \forall k, i \in N, j\in P,\\
& x_{ij} \in \{0,1\},\ \ \forall i\in N, j\in P,\\
& \alpha_{ij} \geq 0,\ \ \forall i, \ell\in N, j\in P.\label{m3:f}
\end{align}

\section{Computational study} \label{resultados}

In this section we provide the results of our computational experience in order to evaluate the performance of the proposed approaches. All the formulations were coded in Python 3.7 in an iMac with 3.3GHz with an Intel Core i7 with 4 cores and 16GB 1867 MHz DDR3 RAM. We used Gurobi 9.1.2 as optimization solver. A time limit of 2 hours was fixed for all the instances.

In order to produce a set of test instances, we use a similar strategy than in \cite{Espejo&Marin&Puerto&RodriguezChia:2010}. We generate two different types of instances for each combination of parameters, $n$, $p$ and $d$. For the instances of type \texttt{random}, the demand points are uniformly generated in  $[0,100]^d$. In the instances of type \texttt{blob} the points where generated as isotropic Gaussian blobs in $[0,100]^d$ with $\lceil \frac{n}{3} \rceil$ cluster centers and standard deviation $1$. Whereas uniform  random instances are the most popular when generating random instances to test Location Science problems, the blob instances more adequately represent the behavior of users which are usually geographically clustered, simulating a higher concentration of users around certain points of interests. The generated instances are available at \url{https://github.com/vblancoOR/intraenvy}.

For the discrete problem, the set of potential sites for the facilities are assumed to be the whole set of demand points and the cost matrix $\Phi$ is pre-computed using the $\ell_1$-norm.

We tested the formulations on a testbed of five instances for each combination of type (\texttt{random} and \texttt{blob}), $d\in \{2,3\}$, $n\in \{10, 20, 30, 40, 50\}$ and different values of $p \in \{2,3,5,7,10, 15, 20, 25, 30, 35, 40\}$ with $p \leq \frac{3n}{4}$. In total, $1400$ different instances where generated.

For each of the instances, we run the following formulations for the minimum $p$-Intra Envy Facility Location problem.

\begin{center}
\begin{tabular}{cc|cc}
\multicolumn{2}{c|}{Discrete} & \multicolumn{2}{c}{Continuous}\\\hline
\texttt{M1}$^D$: & \eqref{pm:0}-\eqref{pm:f} &\texttt{M1}$^C$: &\eqref{cmiemp:obj}-\eqref{cmiemp:f}\\
\texttt{M2}$^D$:  &\eqref{m2:obj}-\eqref{m2:f} & \texttt{M2}$^C$: &\eqref{omp:obj}-\eqref{omp:f}\\
\texttt{M3}$^D$: & \eqref{m3:obj}-\eqref{m3:f} & \texttt{M3}$^C$: &\eqref{ot:obj}-\eqref{ot:f}\\\hline
\end{tabular}
\end{center}

\subsection{Median, Envy and IntraEnvy Measures}

The first experiment that we run is devoted to determine the convenience of the \emph{intra-envy} model in our instances. For each instance, we solve three different Facility Location models for each of the two different solution domains (discrete and continuous) that differ in their optimization criterion, namely, $p$-median, envy and intra-envy. In the $p$-median model, the goal is to minimize the overall sum of the distances from each user to its closest facility, i.e.:
\begin{equation}\label{median}\tag{Median}
\min_{\mathbf{X} \subseteq \X:\atop |\mathbf{X}|=p} \dsum_{i\in N} \min_{j=1, \ldots, p} \Phi_i(X_{j(i)}).
\end{equation}
The (global) envy model aims to minimize the overall envy felt by the users, no matter to which facility is allocated, that is, defining the envy between two users $i, k \in N$ for a given set of facilities $\mathbf{X} \subset \X$ as:
$$
{\rm Envy}_{ik}(\mathbf{X})  = \begin{cases}
 	\Phi_i(X_{j(i)})-\Phi_k(X_{j(k)}) & \mbox{if $\Phi_k(X_{j(i)})<\Phi_i(X_{j(k)})$,}\\
   0 & \mbox{otherwise.}  
\end{cases}
$$
the envy problem consists of:
\begin{equation}\label{median}\tag{Envy}
\min_{\mathbf{X} \subseteq \X:\atop |\mathbf{X}|=p} \dsum_{i\in N} \dsum_{k\in N}  {\rm Envy}_{ik}(\mathbf{X}).
\end{equation}

For each of these solutions, we evaluate the three different objective functions and analyze the results which are shown in figures \ref{fig:Dev_IE_D} to \ref{fig:Dev_E_C}. In these figures, the results for the $p$-median problem are highlighted in green color, those for the envy problem in orange and the results of the intraenvy problem in blue color.

In figures \ref{fig:Dev_IE_D} and \ref{fig:Dev_IE_C} we report the average deviations (by the values of $n$ and $p$), in terms of intra-envy, on the solutions of the models that do not consider such a criterion in their objectives, namely $p$-median/multi-facility Weber and envy, for both the discrete and the continuous instances. The first observation that can be drawn is that the solutions of the Intra-Envy model do not coincide with those obtained with the other models. Specifically, for the discrete instances one can find instances for which the $p$-median problem obtains solutions deviated from the optimal intra-envy in more than $20\%$. For the continuous instances the difference is even more impresive with deviations close to $100\%$ (caused by intra-envy values close to zero). For the discrete instances this deviation decreases with the values of $p$, whereas for the continuous instances the situation is opposite, the larger the $p$ the larger the deviation. It is caused by the fact that the continuous instances allow more flexibility and an overall intra-envy close to zero as $p$ increases. Observe that a zero intra-envy solution is one in which all the users allocated to a facility assume the same allocation costs. If the closest-assignment constraints were not present, it could be obtained by \textit{clustering} users in the same $\ell_1$-norm orbit around a point (the center). Since we also force the users to be allocated to their closest facility, a close to zero-envy solution can be also obtained by co-locating all the facilities at the same position and deciding the clusters of users by similar distances to the single center. This situation is not possible in the discrete instances, but in the continuous instances, as far as $p$ increases, the facilities tend to co-locate. Nevertheless, this is not the situation of the median objective, where co-location is not a valid strrategy.

The high deviation of the median problem with respect to the intraenvy measure can be also observed when averaging by the value of $n$. There, whereas the envy model seems to obtain stable solutions with respect to the intra-envy, the median problem is in average deviated from the intraenvy in more than 10\% for the discrete instances and more than 70\% for the continuous instances. We observed that the deviation for the random instances is $10\%$ smaller than the obtained for the blobs instances.

Figures \ref{fig:Dev_M_D} and \ref{fig:Dev_M_C}  show the results of measuring the median objective (overall allocation costs) on the envy-based solutions with respect to the $p$-median solutions for the two types of instances. This deviation is known as the price of fairness according to Bertsimas et. al~\cite{Bertsimas&Farias&Trichakis:2011}. As expected, solutions with small global envy result in solutions with higher overall median-based allocation costs. Nevertheless, the deviations in these costs of the obtained solutions for the discrete instances are not that large, being the overall extra cost of the intra-envy model less than $9\%$ with respect to the best $p$-median solution. For values of $p\geq 10$, this extra cost reduces, in average, to $4\%$. The performance of the continuous instances is again different with respect to the efficiency. On the one hand, the average deviations are larger than 35\% in all cases, being more than 90\% for the largest values of $p$. Nevertheless, the behaviour of the envy and the intra-envy solutions is similar, being the envy models slightly more efficient (in average $10\%$ for the continuos instances and $2\%$ for the discrete ones) than the intra-envy model.

Finally, in figures \ref{fig:Dev_E_D} and \ref{fig:Dev_E_C} we evaluate the $p$-median and the intra-envy model in terms of the global envy objective. In both types of instances, although the $p$-median model seems to deviate more from the envy model for small values of $p$, the deviations for larger $p$ are neglectable for the discrete instances, being the overall global envy for this model similar to those of the envy model. Nevertheless, the intra-envy model results in solutions that differ from those of the envy model, being the deviation consistently closed to its average (around $3.5\%$). 

Summarizing, the solutions obtained with the intra-envy model clearly differ from those obtained with the classical $p$-median and the envy model proposed in \cite{Espejo&Marin&Puerto&RodriguezChia:2010}. Although determining facilities that exhibit minimum intra-envy has a direct impact in the transportation costs of the solution, in the discrete instances these extra costs are small enough to sacrifice them in case one desire to avoid envies among the customers allocated to a same facility, whereas in the continuous instances they are similar to those obtained with the envy model.

\begin{figure}[h]
\includegraphics[scale=0.8]{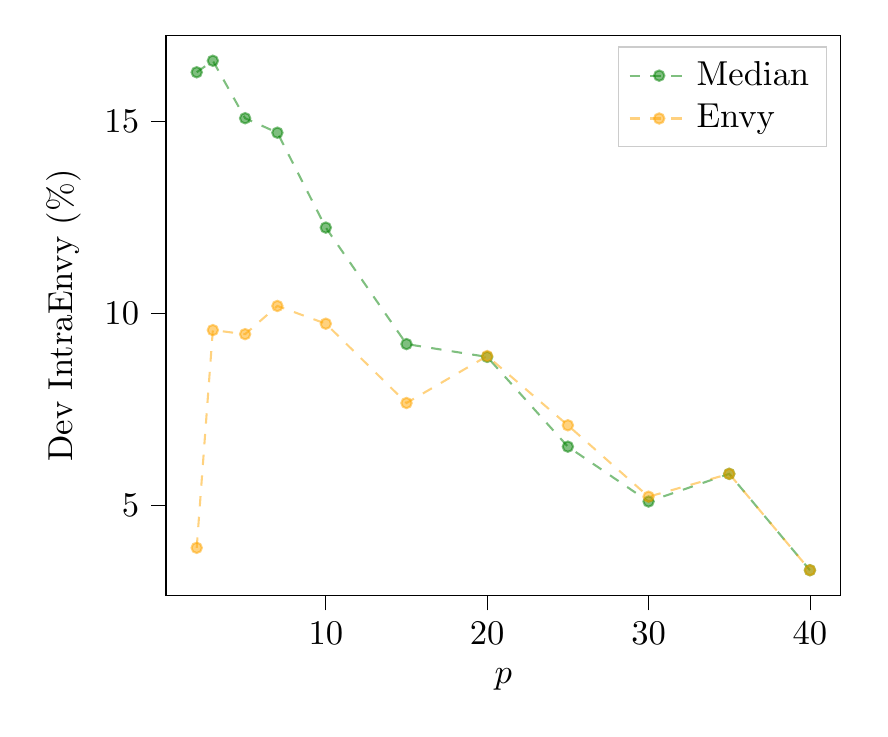}~\includegraphics[scale=0.8]{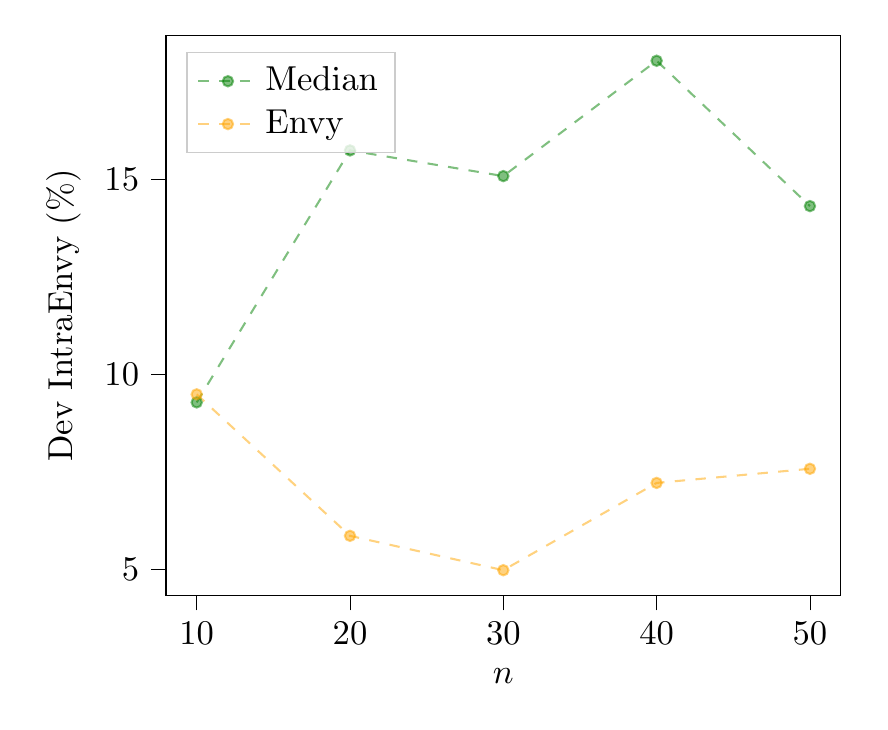}
\caption{Average deviations of $p$-median and Envy solutions in the intra-envy measure with respect to the best intra-envy solution by $p$ (left) and $n$ (right) parameters for the discrete instances.\label{fig:Dev_IE_D}}
\end{figure}

\begin{figure}[h]
\includegraphics[scale=0.8]{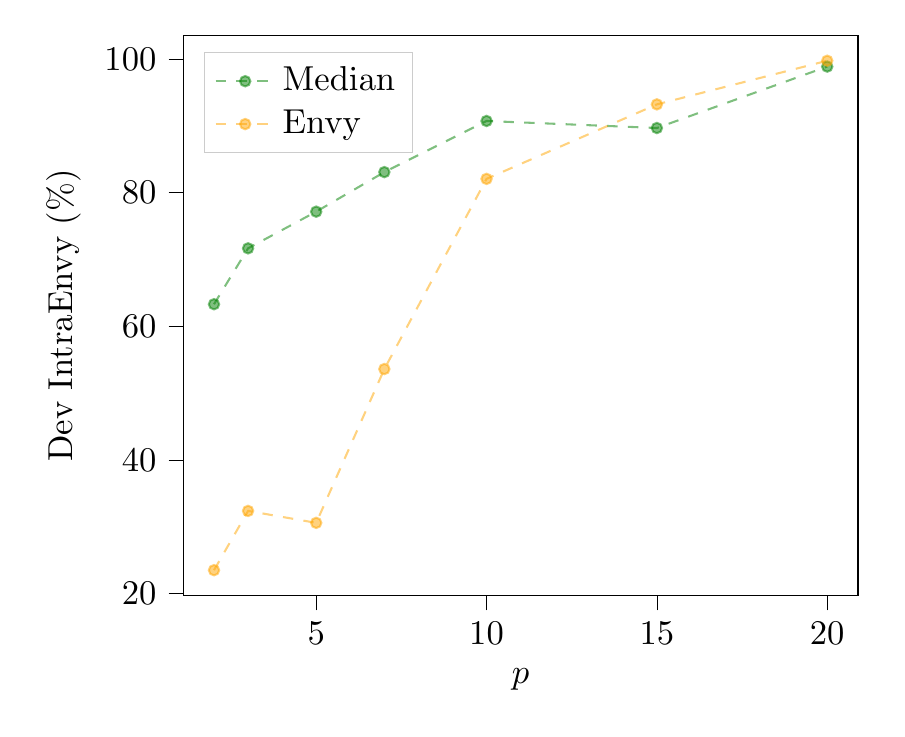}~\includegraphics[scale=0.8]{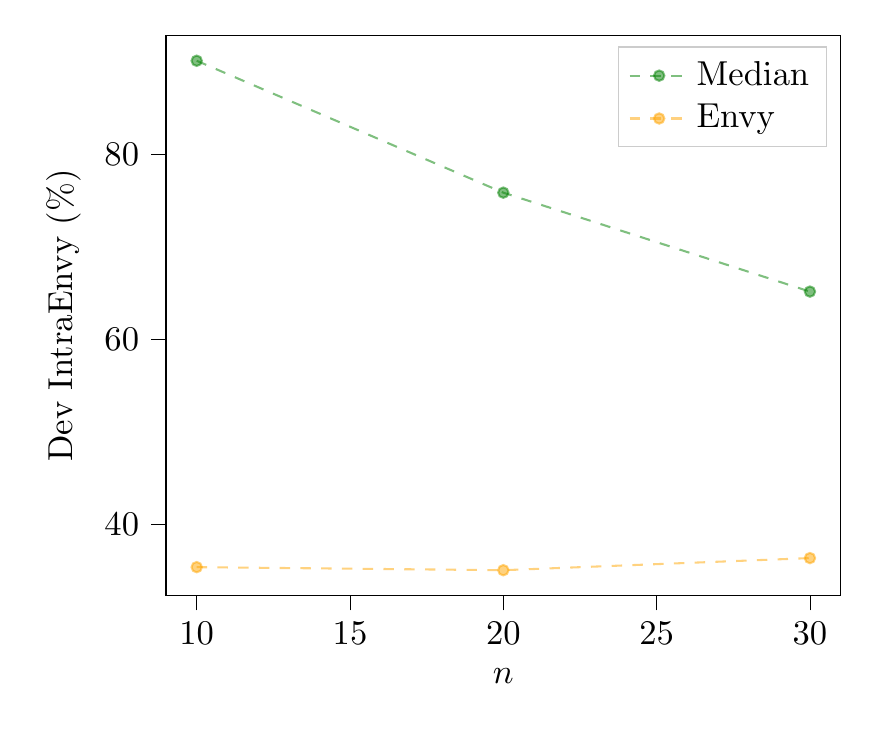}
\caption{Average deviations of $p$-median and Envy solutions in the intra-envy measure with respect to the best intra-envy solution by $p$ (left) and $n$ (right) parameters  for the continuous instances.\label{fig:Dev_IE_C}}
\end{figure}

\begin{figure}[h]
\includegraphics[scale=0.8]{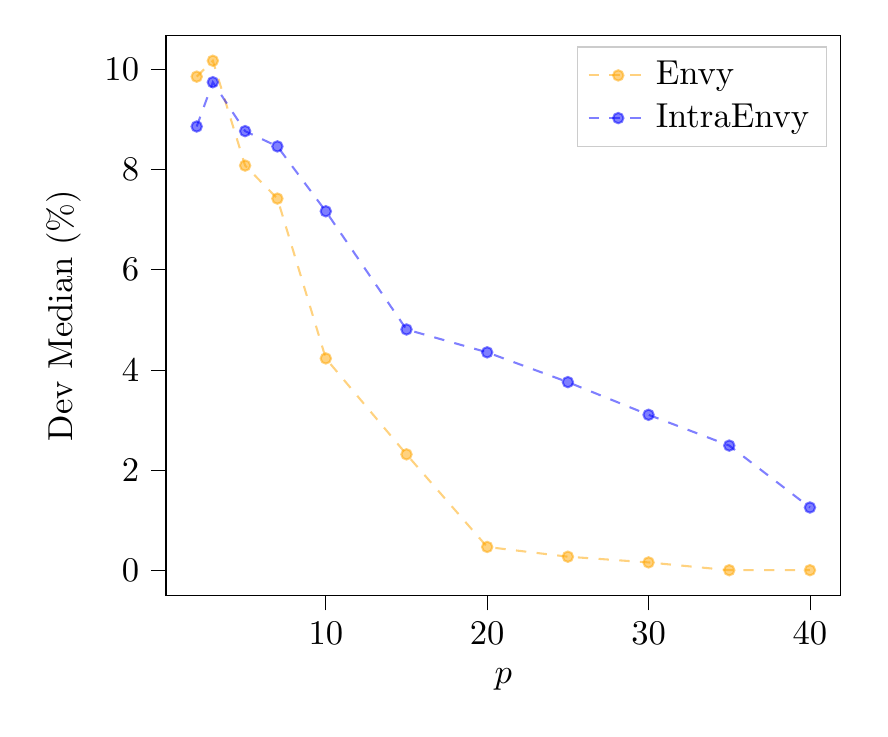}~\includegraphics[scale=0.8]{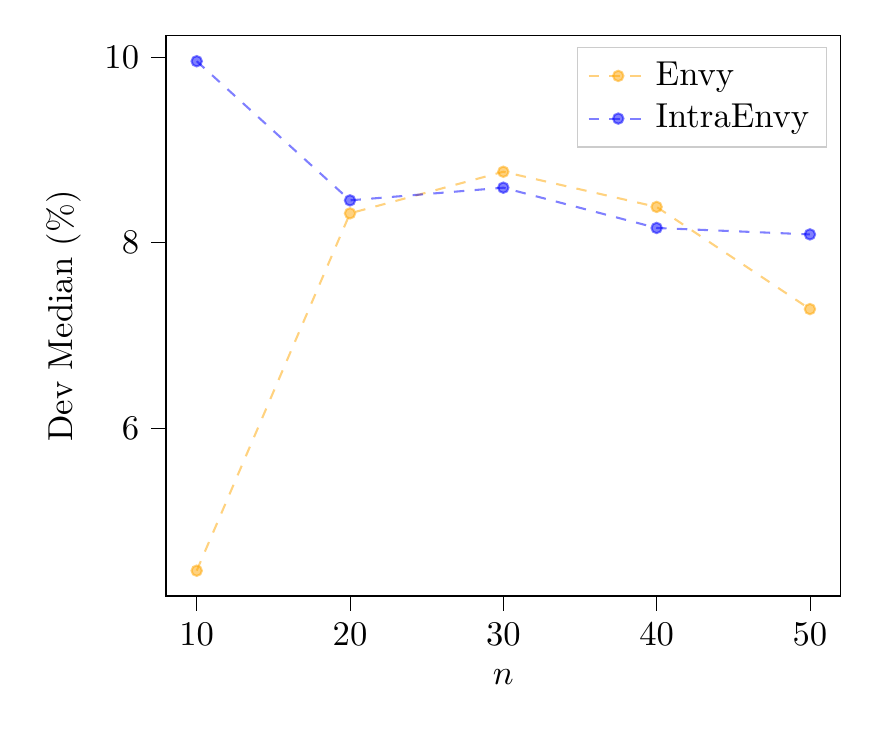}
\caption{Average deviations of Envy and Intra-Envy solutions in the median measure with respect to the best median solution by $p$ (left) and $n$ (right) parameters for the discrete instances.\label{fig:Dev_M_D}}
\end{figure}

\begin{figure}[h]
\includegraphics[scale=0.8]{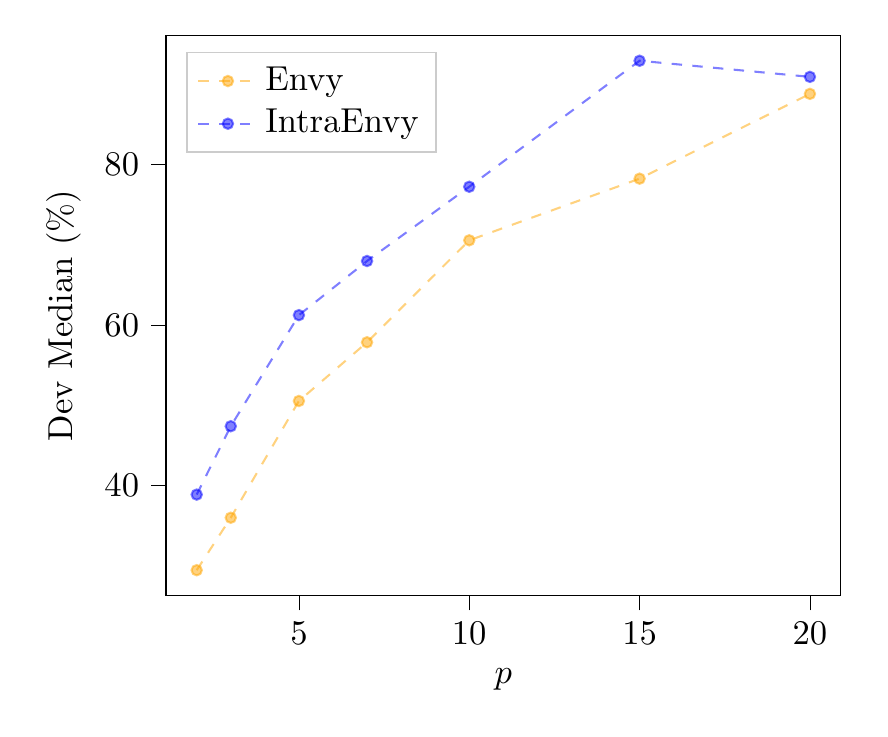}~\includegraphics[scale=0.8]{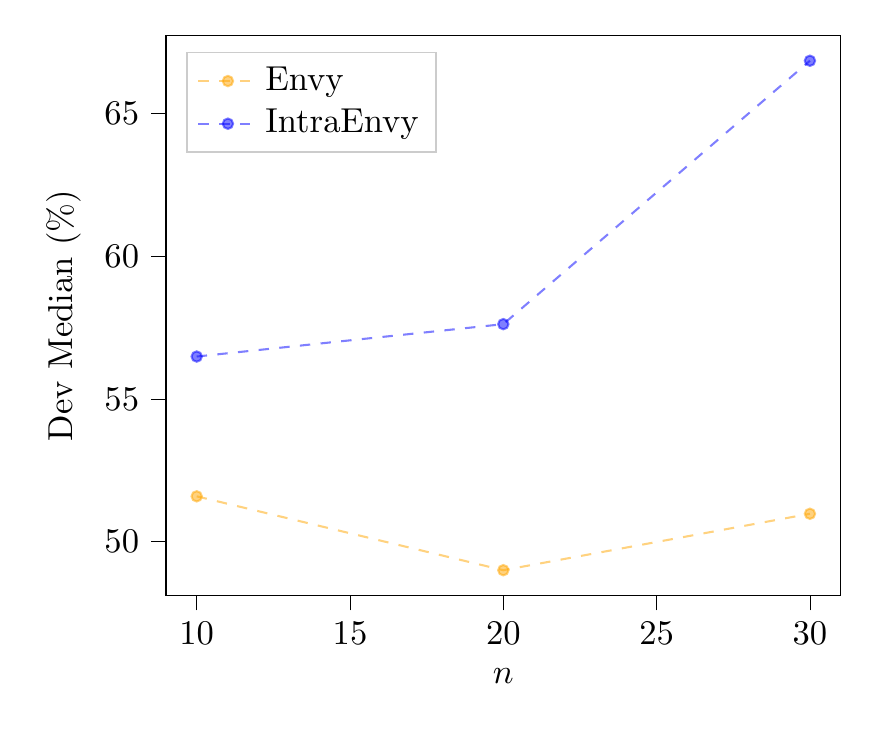}
\caption{Average deviations of Envy and Intra-Envy solutions in the median measure with respect to the best median solution by $p$ (left) and $n$ (right) parameters for the continuous instances.\label{fig:Dev_M_C}}
\end{figure}

\begin{figure}[h]
\includegraphics[scale=0.8]{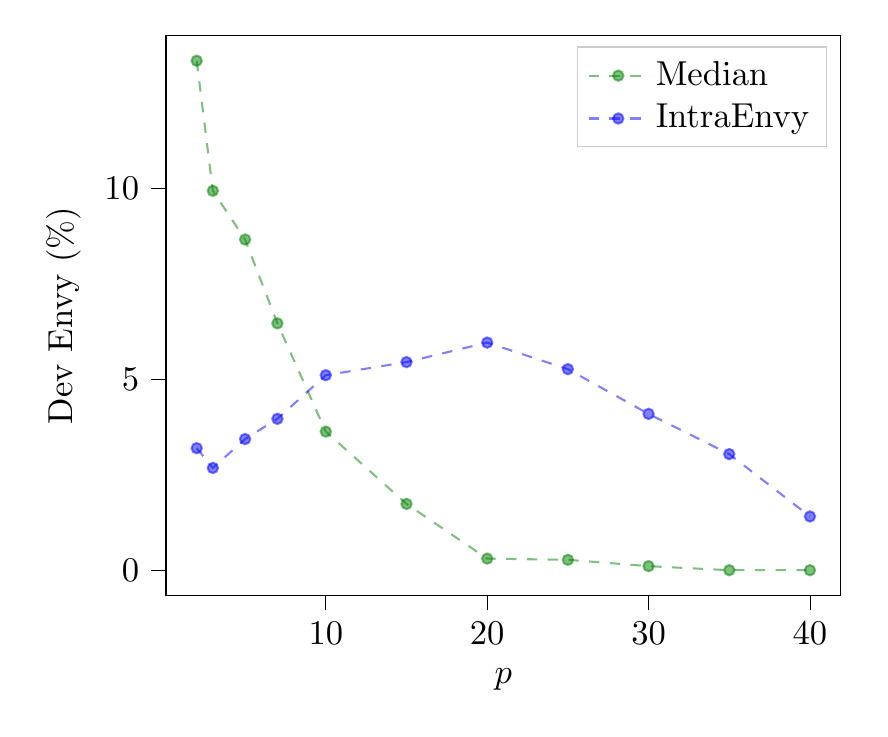}~\includegraphics[scale=0.8]{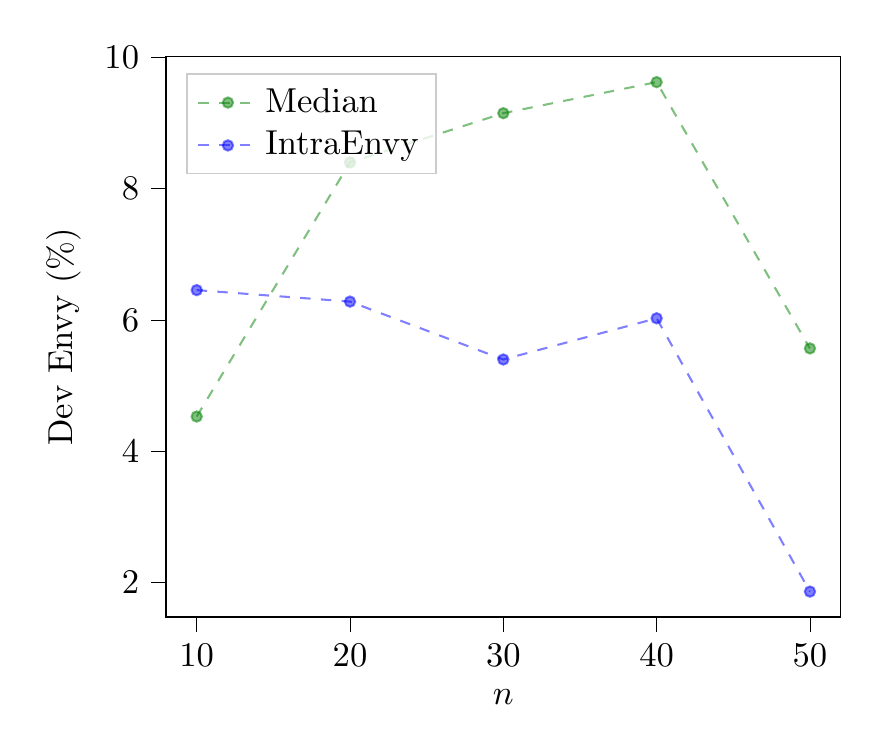}
\caption{Average deviations of $p$-median and Intra-Envy solutions in the global-envy measure with respect to the best envy solution by $p$ (left) and $n$ (right) parameters for the discrete instances.\label{fig:Dev_E_D}}
\end{figure}

\begin{figure}[h]
\includegraphics[scale=0.8]{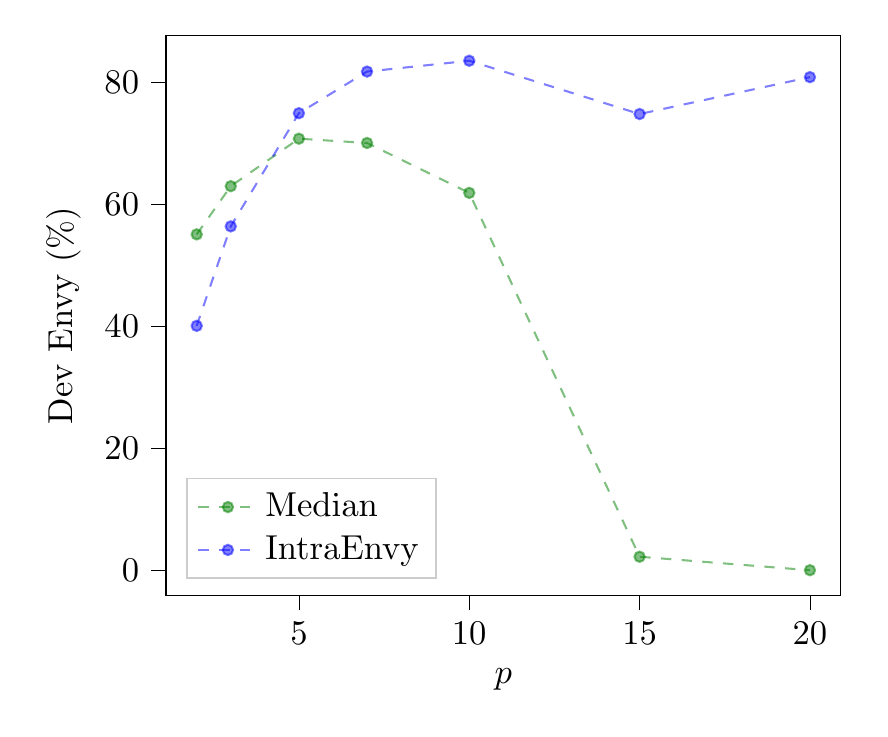}~\includegraphics[scale=0.8]{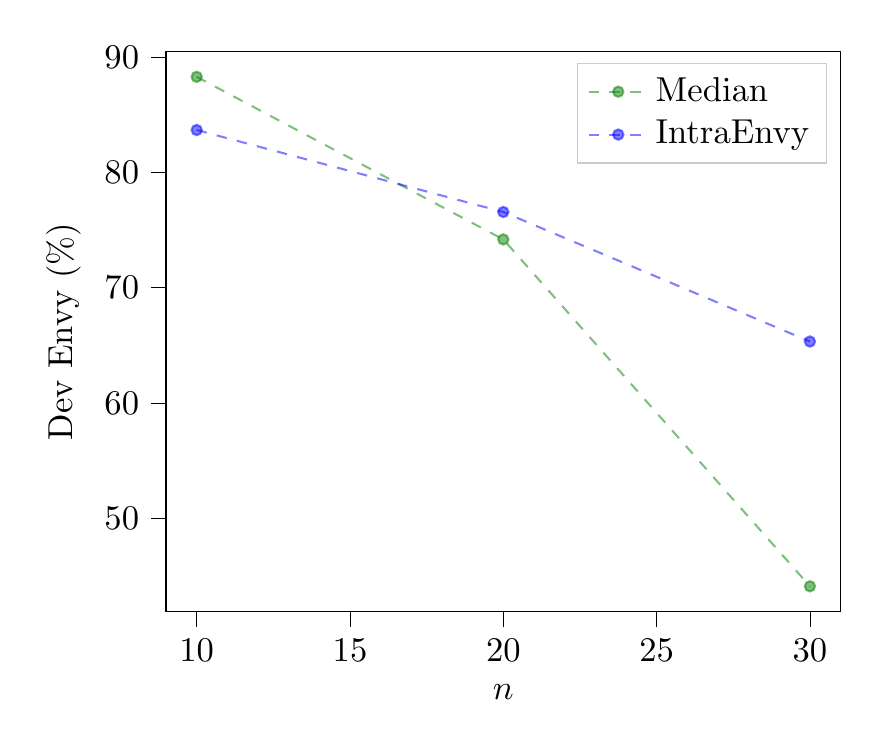}
\caption{Average deviations of $p$-median and Intra-Envy solutions in the global-envy measure with respect to the best envy solution by $p$ (left) and $n$ (right) parameters for the continuous instances.\label{fig:Dev_E_C}}
\end{figure}

\subsection{Computational Performance}

In this section we analyze the computational performance of the different intra-envy formulations that we propose here. Specifically, we study the computational difficulty of solving each of the formulations by means of the consumed CPU time and the MIPGap, for those instances that were not able to be optimally solved within the time limit. We summarize here the obtained results, whereas the detailed results are available for the interested reader in \url{https://github.com/vblancoOR/intraenvy}.

In Table \ref{t:disc1} we show the average CPU time, in seconds, required by the different formulations for solving the discrete location instances with $n$ up to $30$. We average our results by $n$ (number of users), $p$ (number of facilities to be located), type of instance (\texttt{random} or \texttt{blobs}) and integer programming formulation for the $p$-intra envy problem (\texttt{M1}, \texttt{M2}, and \texttt{M3}). In case the none of the averaged instances where solved to proven optimality within the time limit we flag them with \texttt{TL}. The results of solving 300 different instances are summarized.

\begin{table}[htbp]
  \centering
    \begin{tabular}{cr|rrr|rrl|}    &       & \multicolumn{3}{c|}{\texttt{BLB}} & \multicolumn{3}{c|}{\texttt{RND}} \\
   $n$& $p$& \texttt{M1}$^D$ &\texttt{M2}$^D$ & \texttt{M3}$^D$ &	\texttt{M1}& \texttt{M2}$^D$ & \texttt{M3}$^D$ \\
    \hline
    \multirow{3}[2]{*}{10} & 2     & 0.01  & 0.03  &   0.56    & 0.02  & 0.03  &{0.69} \\
          & 3     & 0.01  & 0.03  &   0.61    & 0.03  & 0.03  & 0.63 \\
          & 5     & 0.01  & 0.03  &    0.61   & 0.01  & 0.03  & 0.70 \\
    \hline
    \multirow{5}[2]{*}{20} & 2     & 1.89  & 0.32  &    307.16    & 1.82  & 0.40  &{737.49} \\
          & 3     & 1.05  & 0.42  &   76.28    & 1.42  & 0.53  & {807.96} \\
          & 5     & 0.14  & 0.51  &  1050.24     & 0.54  & 1.00  &{6585.21} \\
          & 7     & 0.06  & 1.01  &   4247.95    & 0.19  & 1.22  &{6525.11} \\
          & 10    & 0.05  & 1.14  &   5457.84    & 0.08  & 4.34  &{6427.51} \\
    \hline
    \multirow{7}[2]{*}{30} & 2     & 402.80 & 2.90  &    \texttt{TL}   & 412.41 & 103.39 & \texttt{TL} \\
          & 3     & 392.30 & 5.36  &   \texttt{TL}    & 131.35 & 9.74  & \texttt{TL} \\
          & 5     & 4.38  & 125.07 &   \texttt{TL}    & 9.44  & 635.34 & \texttt{TL} \\
          & 7     & 1.34  & 164.79 &  \texttt{TL}     & 4.34  & 1173.37 & \texttt{TL} \\
          & 10    & 0.32  & 222.33 & \texttt{TL}      & 1.28  & 2504.56 & \texttt{TL} \\
          & 15    & 0.23  & 364.95 &  \texttt{TL}     & 0.39  & 1231.61 & \texttt{TL} \\
          & 20    & 0.18  & 32.88 &   \texttt{TL}    & 0.23  & 378.70 & \texttt{TL} \\
    \hline
    \end{tabular}%
  \caption{CPU Times for discrete instances with $n\in \{10,20,30\}$.}
  \label{t:disc1}%
\end{table}%

One can observe from these results that model \texttt{M3}, the one based on the ordered median reformulation of the problem, is not competitive with the other formulations. Specifically, even for instances with $n=20$, formulation \texttt{M2} required much more CPU time to solve the instances than the rest of the formulations, for both the random and the blobs instances. Note that, the ordered median representation of the problem requires $3n^2p$ auxiliary variables ($u$- and $v$-variables in \eqref{m3:obj} -\eqref{m3:f}) apart from the $np + n$ decision variables ($x$ and $y$) whereas \texttt{M1} (resp. \texttt{M2}) uses $n^2$ auxiliary variables ($\theta$) and the $np+p$ (resp. $n$) decision variables. We do not observe significative different in the behaviour of the three formulations neither in the two dimensions ($d=2, 3$) nor the type of instance (\texttt{BLB} or \texttt{RND}). 

All the instances of these sizes were solved up to optimality with formulations \texttt{M1} and \texttt{M2}. Nevertheless, none of the instances with $n=30$ were optimally solved with \texttt{M3} and only $13\%$ of the instances with $n=20$. 

Comparing \texttt{M1} and \texttt{M2} for these instances, it seems that \texttt{M2} requires less CPU time for solving the instances with small values of $p$ ($p\in \{2,3\}$) whereas \texttt{M1} consumes less CPU time than \texttt{M2} for instances with larger values of $p$ ($p\geq 5$). 

In Table \ref{t:disc2}  we show the results obtained with formulations \texttt{M1} and \texttt{M2} for the larger instances. We report there, for both formulations, the consumed CPU time in seconds, the the percent of instances not optimally solved within the time limit, the percent MIPGap and the percent deviation of the best obtained solutions with respect to the relaxed solution after exploring the root node of the branch and bound tree. 

The same behavior of \texttt{M1} and \texttt{M2} was observed for these instances, that is, \texttt{M2} outperforms \texttt{M1} for small values of $p$ whereas \texttt{M1} obtained better results for larger values of $p$. This perforrmance is also observed in the number of unsolved instances and the MIPGap. 

The information about the rootgap provides details about the weakness of the relaxed polyhedron induced by \texttt{M2} with respect to \texttt{M1} which makes more difficult to solve larger instances.

\begin{table}[htbp]
  \centering
    \begin{tabular}{cr|rr|rr|rr|rr|}    &       & \multicolumn{2}{c|}{\textbf{CPU Time}} & \multicolumn{2}{c|}{\textbf{\%Unsolved}} &\multicolumn{2}{c|}{\textbf{\%MIPGap}}  & \multicolumn{2}{c|}{\textbf{\%RootGap}}  \\
   $n$& $p$& \texttt{M1}$^D$ &\texttt{M2}$^D$  &	\texttt{M1}$^D$& \texttt{M2}$^D$ &\texttt{M1}$^D$ &\texttt{M2}$^D$&\texttt{M1}$^D$ &\texttt{M2}$^D$ \\\hline
      \cline{1-6}\cline{9-10}    \multirow{9}[2]{*}{40} & 2     & 1558.92 & 182.05 & 0\%   & 0\%   & 0\%   & 0\%   & 54\%  & 100\%\\
          & 3     & 2024.13 & 380.26 & 0\%   & 0\%   & 0\%   & 0\%   & 46\%  & 100\% \\
          & 5     & 1008.44 & 3944.59 & 0\%   & 25\%  & 0\%   & 10\%  & 30\%  & 100\% \\
          & 7     & 421.95 & 6235.06 & 0\%   & 60\%  & 0\%   & 33\%  & 21\%  & 100\% \\
          & 10    & 196.59 & 7088.01 & 0\%   & 95\%  & 0\%   & 54\%  & 9\%   & 100\% \\
          & 15    & 2.00  & \texttt{TL} & 0\%   & 100\% & 0\%   & 57\%  & 1\%   & 100\% \\
          & 20    & 0.97  & \texttt{TL} & 0\%   & 100\% & 0\%   & 53\%  & 1\%   & 100\% \\
          & 25    & 0.66  & 6977.39 & 0\%   & 85\%  & 0\%   & 29\%  & 0\%   & 100\% \\
          & 30    & 0.53  & 2488.75 & 0\%   & 10\%  & 0\%   & 3\%   & 0\%   & 100\%\\\hline
\multirow{11}[2]{*}{50} & 2     & 6756.18 & 558.82 & 80\%  & 0\%   & 49\%  & 0\%   & 65\%  & 100\% \\
          & 3     & 7090.85 & 2933.55 & 95\%  & 0\%   & 48\%  & 0\%   & 57\%  & 100\% \\
          & 5     & 6619.14 & 6994.48 & 70\%  & 90\%  & 21\%  & 70\%  & 40\%  & 100\% \\
          & 7     & 4435.05 & \texttt{TL} & 40\%  & 100\% & 6\%   & 89\%  & 30\%  & 100\% \\
          & 10    & 1881.96 & \texttt{TL} & 0\%   & 100\% & 0\%   & 90\%  & 19\%  & 100\% \\
          & 15    & 260.42 & \texttt{TL} & 0\%   & 100\% & 0\%   & 90\%  & 4\%   & 100\% \\
          & 20    & 4.25  & \texttt{TL} & 0\%   & 100\% & 0\%   & 86\%  & 1\%   & 100\% \\
          & 25    & 2.76  & \texttt{TL} & 0\%   & 100\% & 0\%   & 78\%  & 0\%   & 100\% \\
          & 30    & 1.88  & \texttt{TL} & 0\%   & 100\% & 0\%   & 71\%  & 0\%   & 100\% \\
          & 35    & 1.40  & \texttt{TL} & 0\%   & 100\% & 0\%   & 59\%  & 1\%   & 100\% \\
          & 40    & 1.14  & 5266.67 & 0\%   & 45\%  & 0\%   & 17\%  & 1\%   & 100\%\\
    \hline
    \end{tabular}%
    
  \caption{Computational results for discrete instances with $n\in \{40, 50\}$.}
  \label{t:disc2}%
\end{table}%

In Table \ref{t:cont}  we report the results obtained for the continuous instances, organized similarly to those of the largest discrete instances for the three formulations we propose. We observe that the first formulation, \texttt{M1}, seems to have a better performance than the others in both consumed CPU time and number of optimally solved instances. Concretely, \texttt{M1} was able to solve $47\%$ of the instances whereas \texttt{M2} and \texttt{M3} only solved $26\%$ and $27\%$ of them, respectively. Moreover, the MIPGap for the unsolved instances within the time limit was smaller in the case of  \texttt{M1} .  As expected, the MIPGap and the root relaxation gaps were very large since the nonconvex terms that appear in the formulation to represent the closest distance between users and facilities were reformulated using {\em big M} constraints. Regarding CPU times, the \textit{easiest} instances seem to be those with large values of $p$, even if they use the largest number of variables for a fixed value of $n$. These times were consistently paired with smaller root gap relaxations.

Comparing the discrete and the continuous instances, the latter exhibit a higher computational difficulty: Only a few instances with 30 users could be solved up to optimality, whereas in the discrete case this size extended to 50 users.

\begin{table}[htbp]
  \centering
  {\small  \begin{tabular}{cr|rrr|rrr|rrr|rrr|}    &       & \multicolumn{3}{c|}{\textbf{CPU Time}} & \multicolumn{3}{c|}{\textbf{\%Unsolved}} &\multicolumn{3}{c|}{\textbf{\%MIPGap}}  & \multicolumn{3}{c|}{\textbf{\%RootGap}}  \\
   $n$& $p$& \texttt{M1}$^C$ &\texttt{M2}$^C$ &\texttt{M3}$^C$ &	\texttt{M1}$^C$& \texttt{M2}$^C$ & \texttt{M3}$^C$ &\texttt{M1}$^C$ &\texttt{M2}$^C$&\texttt{M3}$^C$ & \texttt{M1}$^C$ &\texttt{M2}$^C$ &\texttt{M3}$^C$ \\\hline
  \multirow{3}[2]{*}{10} & 2     & 20.81 & 68.76 & 108.92 & 0\%   & 0\%   & 0\%   & 0\%   & 0\%   & 0\%   & 100\% & 100\% & 100\%\\
          & 3     & 22.72 & 286.76 & 398.21 & 0\%   & 0\%   & 0\%   & 0\%   & 0\%   & 0\%   & 50\%  & 50\%  & 50\% \\
          & 5     & 5.41  & 9.11  & 17.60 & 0\%   & 0\%   & 0\%   & 0\%   & 0\%   & 0\%   & 0\%   & 0\%   & 0\%\\\hline
 \multirow{5}[2]{*}{20} & 2     & 2017.43 & \texttt{TL} & \texttt{TL} & 15\%  & 100\% & 100\% & 8\%   & 100\% & 100\% & 100\% & 100\% & 100\%\\
          & 3     & 6681.26 & \texttt{TL} & \texttt{TL} & 85\%  & 100\% & 100\% & 77\%  & 100\% & 100\% & 100\% & 100\% & 100\% \\
          & 5     & \texttt{TL} & \texttt{TL} & \texttt{TL} & 100\% & 100\% & 100\% & 100\% & 100\% & 100\% & 100\% & 100\% & 100\% \\
          & 7     & 5085.74 & 6557.49 & 6359.81 & 60\%  & 80\%  & 85\%  & 60\%  & 80\%  & 84\%  & 60\%  & 80\%  & 80\% \\
          & 10    & 1104.34 & 3172.46 & 4008.00 & 5\%   & 25\%  & 45\%  & 5\%   & 21\%  & 45\%  & 5\%   & 20\%  & 45\%\\\hline
\multirow{7}[2]{*}{30}  & 2     & 7151.42 & \texttt{TL} & \texttt{TL} & 95\%  & 100\% & 100\% & 75\%  & 100\% & 100\% & 95\%  & 90\%  & 95\% \\
          & 3     & \texttt{TL} & \texttt{TL} & \texttt{TL} & 100\% & 100\% & 100\% & 100\% & 100\% & 100\% & 100\% & 100\% & 95\% \\
          & 5     & \texttt{TL} & \texttt{TL} & \texttt{TL} & 100\% & 100\% & 100\% & 100\% & 100\% & 100\% & 100\% & 100\% & 100\% \\
          & 7     & \texttt{TL} & \texttt{TL} & \texttt{TL} & 100\% & 100\% & 100\% & 100\% & 100\% & 100\% & 100\% & 70\%  & 95\% \\
          & 10    & 6630.26 & \texttt{TL} & \texttt{TL} & 90\%  & 100\% & 100\% & 90\%  & 100\% & 100\% & 90\%  & 95\%  & 95\% \\
          & 15    & 5453.78 & \texttt{TL} & 6689.83 & 65\%  & 100\% & 85\%  & 65\%  & 100\% & 85\%  & 60\%  & 50\%  & 85\% \\
          & 20    & 3615.18 & \texttt{TL} & 6543.22 & 35\%  & 100\% & 85\%  & 35\%  & 100\% & 85\%  & 25\%  & 55\%  & 85\%\\\hline    \end{tabular}}%
  \caption{Computational results for continuous instances.}
  \label{t:cont}%
\end{table}%

\section{Conclusions}

We introduce in this paper the intra-Envy $p$-facility location problem in order to  determine the optimal position of $p$ services by minimizing the envy felt by the users allocated to the same facility. This problem allows to find local fair solutions of $p$-facility location taking into account the realistic assumption that users are not usually compared with all rest of users but with those that make use of the same facility. Furthermore we provide a general framework for the problem which is valid for the two most popular solution domains in facility location, discrete and continuous.

We derive different MILP formulations for the discrete and continuous versions of the problem, assuming that the distance measure for the continuous problems is the $\ell_1$-norm. The results of an extensive battery of computational experiments are reported. Apart from comparing computationally the different formulations, we analyze the solutions evaluating the proposed intra-envy measure, the global envy and the median functions.

Future research on the topic includes the study of valid inequalities for the different models that we propose. For larger instances, it would be helpful to design heuristic approaches that assure good quality solution in smaller computing times.

\section*{Acknowledgements}

Alfredo Mar\'in has been supported by Spanish Ministry of Science and Innovation under project PID2019-110886RB-I00. Part of this research was conducted while he was on sabbatical at IMUS in Universidad de Sevilla, Spain. V\'ictor Blanco and Justo Puerto have been partially supported by the Agencia Estatal de Investigación (AEI) and the European Regional Development’s fund (ERDF): PID2020-114594GB-C2;  Regional Government of Andalusia P18-FR-1422 and B-FQM-322-UGR20 (ERDF),  and Fundación BBVA: project NetmeetData (Ayudas Fundación BBVA a equipos de investigación científica 2019). V\'ictor Blanco was also partially supported by the IMAG-Maria de Maeztu grant CEX2020-001105-M /AEI /10.13039/501100011033 and Ayudas de Recualificaci\'on UGR 2021 (NextGenerationEU).

\providecommand{\bysame}{\leavevmode\hbox to3em{\hrulefill}\thinspace}
\providecommand{\MR}{\relax\ifhmode\unskip\space\fi MR }
\providecommand{\MRhref}[2]{%
  \href{http://www.ams.org/mathscinet-getitem?mr=#1}{#2}
}
\providecommand{\href}[2]{#2}


\begin{thebibliography}{10}

\bibitem{Berman&Kaplan:1990}
Oded Berman and Edward~H Kaplan, \emph{Equity maximizing facility location
  schemes}, Transportation Science \textbf{24} (1990), no.~2, 137--144.

\bibitem{Bertsimas&Farias&Trichakis:2011}
Dimitris Bertsimas, Vivek~F Farias, and Nikolaos Trichakis, \emph{The price of
  fairness}, Operations Research \textbf{59} (2011), no.~1, 17--31.

\bibitem{Bertsimas&Farias&Trichakis:2012}
\bysame, \emph{On the efficiency-fairness trade-off}, Management Science
  \textbf{58} (2012), no.~12, 2234--2250.

\bibitem{Blanco&Gazquez:2022}
V\'ictor Blanco and Ricardo G{\'a}zquez, \emph{Fairness in maximal covering
  facility location problems}, Available at Arxiv, 2022.

\bibitem{Blanco&Puerto&ElHaj:2014}
V\'ictor Blanco, Justo Puerto, and Safae El-Haj Ben-Ali, \emph{Revisiting
  several problems and algorithms in continuous location with $\ell_p$ norms},
  Computational Optimization and Applications \textbf{58} (2014), no.~3,
  563--595.

\bibitem{Blanco&Puerto&ElHaj:2016}
\bysame, \emph{Continuous multifacility ordered median location problems},
  European Journal of Operational Research \textbf{250} (2016), no.~1, 56--64.

\bibitem{Brams&Fishburn:2000}
Steven~J Brams and Peter~C Fishburn, \emph{Fair division of indivisible items
  between two people with identical preferences: Envy-freeness,
  {P}areto-optimality, and equity}, Social Choice and Welfare \textbf{17}
  (2000), no.~2, 247--267.

\bibitem{Chanta&Mayorga&Kurz&McLay:2011}
Sunarin Chanta, Maria~E Mayorga, Mary~E Kurz, and Laura~A McLay, \emph{The
  minimum p-envy location problem: a new model for equitable distribution of
  emergency resources}, IIE Transactions on Healthcare Systems Engineering
  \textbf{1} (2011), no.~2, 101--115.

\bibitem{Chanta&Mayorga&McLay:2014}
Sunarin Chanta, Maria~E Mayorga, and Laura~A McLay, \emph{The minimum p-envy
  location problem with requirement on minimum survival rate}, Computers \&
  Industrial Engineering \textbf{74} (2014), 228--239.

\bibitem{Chun:2006}
Youngsub Chun, \emph{No-envy in queueing problems}, Economic Theory \textbf{29}
  (2006), no.~1, 151--162.

\bibitem{DallAglio&Hill:2003}
Marco Dall'Aglio and Theodore~P Hill, \emph{Maximin share and minimax envy in
  fair-division problems}, Journal of Mathematical Analysis and Applications
  \textbf{281} (2003), no.~1, 346--361.

\bibitem{dominguez2022mixed}
Concepci{\'o}n Dom{\'\i}nguez, Martine Labb{\'e}, and Alfredo Mar{\'\i}n,
  \emph{Mixed-integer formulations for the capacitated rank pricing problem
  with envy}, Computers \& Operations Research \textbf{140} (2022), 105664.

\bibitem{Eiselt:1995}
Horst~A Eiselt and Gilbert Laporte, \emph{Objectives in location problems},
  Facility location: a survey of applications and methods (Zvi Drezner and
  Horst~W. Hamacher, eds.), Springer, 1995, pp.~151--179.

\bibitem{Erkut:1993}
Erhan Erkut, \emph{Inequality measures for location problems.}, Computers \&
  Operations Research (1993).

\bibitem{Espejo&Marin&Puerto&RodriguezChia:2010}
Inmaculada Espejo, Alfredo Mar{\'\i}n, Justo Puerto, and Antonio~M
  Rodr{\'\i}guez-Ch{\'\i}a, \emph{A comparison of formulations and solution
  methods for the minimum-envy location problem}, Computers \& Operations
  Research \textbf{36} (2009), no.~6, 1966--1981.

\bibitem{Filippi&Guastttaroba&Speranza:2021}
Carlo Filippi, Gianfranco Guastaroba, and M~Grazia Speranza, \emph{On
  single-source capacitated facility location with cost and fairness
  objectives}, European Journal of Operational Research \textbf{289} (2021),
  no.~3, 959--974.

\bibitem{Garfinkel&Fernandez&Lowe:2006}
Robert Garfinkel, Elena Fern{\'a}ndez, and Timothy~J Lowe, \emph{The k-centrum
  shortest path problem}, Top \textbf{14} (2006), no.~2, 279--292.

\bibitem{Haake&Rait&Su:2002}
Claus-Jochen Haake, Matthias~G Raith, and Francis~Edward Su, \emph{Bidding for
  envy-freeness: A procedural approach to n-player fair-division problems},
  Social Choice and Welfare \textbf{19} (2002), no.~4, 723--749.

\bibitem{Hakimi:1964}
S~Louis Hakimi, \emph{Optimum locations of switching centers and the absolute
  centers and medians of a graph}, Operations Research \textbf{12} (1964),
  no.~3, 450--459.

\bibitem{Halpern&Maimon:1981}
Jonathan Halpern and Oded Maimon, \emph{Accord and conflict among several
  objectives in locational decisions on tree networks}, North-Holland,
  Amsterdam, 1981.

\bibitem{Laporte&Saldanha&Nickel:2019}
Gilbert Laporte, Francisco Saldanha-da Gama, , and Stephan Nickel,
  \emph{Location science}, 2019.

\bibitem{Marin&Nickel&Puerto:2009}
Alfredo Mar{\'\i}n, Stefan Nickel, Justo Puerto, and Sebastian Velten, \emph{A
  flexible model and efficient solution strategies for discrete location
  problems}, Discrete Applied Mathematics \textbf{157} (2009), no.~5,
  1128--1145.

\bibitem{Marin&Nickel&Velten:2010}
Alfredo Mar{\'\i}n, Stefan Nickel, and Sebastian Velten, \emph{An extended
  covering model for flexible discrete and equity location problems},
  Mathematical Methods of Operations Research \textbf{71} (2010), no.~1,
  125--163.

\bibitem{MarinPoncePuerto20}
Alfredo Mar{\'\i}n, Diego Ponce, and Justo Puerto, \emph{A fresh view on the
  discrete ordered median problem based on partial monotonicity}, European
  Journal of Operational Research \textbf{286} (2020), no.~3, 839--848.

\bibitem{Marsh&Schilling:1994}
Michael~T Marsh and David~A Schilling, \emph{Equity measurement in facility
  location analysis: A review and framework}, European Journal of Operational
  Research \textbf{74} (1994), no.~1, 1--17.

\bibitem{Meng&McCauley&Kaplan&Leung&Coskun:2015}
Jie Meng, Samuel McCauley, Fulya Kaplan, Vitus~J Leung, and Ayse~K Coskun,
  \emph{Simulation and optimization of hpc job allocation for jointly reducing
  communication and cooling costs}, Sustainable Computing: Informatics and
  Systems \textbf{6} (2015), 48--57.

\bibitem{Mesa&Puerto&Tamir:2003}
Juan~A Mesa, Justo Puerto, and Arie Tamir, \emph{Improved algorithms for
  several network location problems with equality measures}, Discrete Applied
  Mathematics \textbf{130} (2003), no.~3, 437--448.

\bibitem{Moulin:2014}
Herv{\'e} Moulin, \emph{Cooperative microeconomics: a game-theoretic
  introduction}, Princeton University Press, 2014.

\bibitem{Mulligan:1991}
Gordon~F Mulligan, \emph{Equality measures and facility location}, Papers in
  Regional Science \textbf{70} (1991), no.~4, 345--365.

\bibitem{Ogryczak&Tamir:2003}
Wlodzimierz Ogryczak and Arie Tamir, \emph{Minimizing the sum of the k largest
  functions in linear time}, Information Processing Letters \textbf{85} (2003),
  no.~3, 117--122.

\bibitem{Ohseto:2005}
Shinji Ohseto, \emph{Strategy-proof assignment with fair compensation},
  Mathematical Social Sciences \textbf{50} (2005), no.~2, 215--226.

\bibitem{Papai:2003}
Szilvia P{\'a}pai, \emph{Groves sealed bid auctions of heterogeneous objects
  with fair prices}, Social Choice and Welfare \textbf{20} (2003), no.~3,
  371--385.

\bibitem{PuertoChiaTamir17}
Justo Puerto, Antonio~M Rodr\'{\i}guez-Ch\'{\i}a, and Arie Tamir,
  \emph{Revisiting k-sum optimization}, Mathematical Programming \textbf{165}
  (2017), no.~2, 579--604.

\bibitem{Reijnierse&Potters:1998}
J~Hans Reijnierse and Jos~AM Potters, \emph{On finding an envy-free
  {P}areto-optimal division}, Mathematical Programming \textbf{83} (1998),
  no.~1, 291--311.

\bibitem{Savas:1978}
Emanuel~S Savas, \emph{On equity in providing public services}, Management
  Science \textbf{24} (1978), no.~8, 800--808.

\bibitem{Shioura&Sun&Yang:2006}
Akiyoshi Shioura, Ning Sun, and Zaifu Yang, \emph{Efficient strategy proof fair
  allocation algorithms}, Journal of the Operations Research Society of Japan
  \textbf{49} (2006), no.~2, 144--150.

\bibitem{Webb:1999}
William~A Webb, \emph{An algorithm for super envy-free cake division}, Journal
  of Mathematical Analysis and Applications \textbf{239} (1999), no.~1,
  175--179.

\end{thebibliography}
\end{document}